\newtheorem{thm}{Theorem}[section]
\newtheorem{coro}[thm]{Corollary}
\newtheorem{lem}[thm]{Lemma}
\newtheorem{prop}[thm]{Proposition}
\theoremstyle{definition}
\newtheorem{defn}[thm]{Definition}
\newtheorem{rem}[thm]{Remark}
\newtheorem{ex}[thm]{Example}
 \newtheorem{notat}[thm]{Notation}
\numberwithin{equation}{section}
\newcommand{\mC}{\mbox{${{\mathcal C}}$}}
\newcommand{\mR}{\mbox{${{\mathcal R}}$}}
\newcommand{\C}{\mbox{${{\mathbb C}}$}}
\newcommand{\G}{\mbox{${{\Gamma_2}}$}}
\newcommand\atopn[2]{\genfrac{}{}{0pt}{}{#1}{#2}}
\begin{document}

\title[On Cannon cone types and multiplicative
functions]{On Cannon cone types and 
vector-valued multiplicative functions for genus-two-surface-group}
\author[S. Saliani]{Sandra Saliani}
\address{Dipartimento di Matematica, Informatica ed Economia\\
Universit\`{a} degli Studi
 della Basilicata\\ Viale dell'ateneo lucano 10, 85100 Po\-ten\-za, ITALIA}
\email{sandra.saliani@unibas.it}

\date{\today}

\begin{abstract}
We consider Cannon cone types for a surface group of genus $g$, and we give algebraic criteria for
establishing the cone type of a given cone and of all its sub-cones. We also re-prove
that the number of cone types is exactly $8g(2g - 1)+1.$
In the genus $2$ case, we explicitly provide the $48\times 48$ matrix of cone types, $M,$
and we prove that $M$ is  primitive, hence Perron-Frobenius.
Finally we define  vector-valued multiplicative functions and we show how to compute
their values by means of $M$.
\end{abstract}

\subjclass[2010]{Primary: 05C50. Secondary: 20F65, 20F67}

\keywords{cone types, surface group, Cayley graph, Perron-Frobenius matrix, multiplicative functions}

\maketitle

\section{Introduction}

Let $\Gamma_g$ be a surface group of genus $g$.
There are several definitions of cone types available for surface groups, in this
paper we are interested on those called simply
{\it Cannon cone types} \cite{Ca}, not to be confused to the
{\it canonical Cannon cone types} \cite{FP}.

The aim of this paper is to  give an overview on Cannon cone
types, provide a matrix  (for $g=2$), called matrix of cone types, whose columns
are cone types of successors of any  element of $\Gamma_2$ with a given cone type,
show that it is a Perron-Frobenius matrix, and to apply it in the computation
of elementary multiplicative functions.

Even though some results on cone types are ``folklore'' by
now, as far as we know there are no results available in the literature that
can help us in  constructing  such a matrix. We try to fill this gap in the
present work. Our approach to cone types is more of combinatorial/algebraic type
than geometric.

We  believe that there is a  necessity to explicitly provide the matrix of cone types,
for its potential use in numerical algorithms; this feeling, at the time of Cannon's
manuscript,  maybe was not so urgent.
Recently, instead, in the work of Gouezel \cite{G}, the matrix
of  {\it canonical Cannon cone types} has been used to estimate numerically
 the lower bound of the spectral radius of a random walk  on a
genus $2$ surface group, improving a previous result by Bartholdi \cite{B}.

Another  possible application of the matrix of cone types is related
to the construction of vector-valued elementary multiplicative functions,
and the associated new class of representations on surface groups.
The latter have been already defined for free
groups by
Kuhn and Steger in \cite{KS} (see also  \cite{KSS}), further extended
to virtual free groups by Iozzi, et al. in \cite{IKS}, and presently object of
 a work in progress on surface groups by Kuhn, Steger and their collaborators.

The structure of the paper is the following: after a review on cone types in
Section \ref{cannon}, in Section \ref{how} we establish an algebraic criterion to determine any
element's cone type (which is equivalent to determine the cone type of each cone);
this will lead us also to a proof for the well known
fact that there are exactly $8g(2g - 1)+1$ cone types in $\Gamma_g$.
In Section \ref{successors} we  provide cone types for each successors of the $48+1$
possible cone types in $\Gamma_2$. In Section \ref{matrix} we construct the matrix of
cone types and we  show that it is a Perron-Frobenius matrix. Finally, in Section \ref{multi}
we apply the matrix of cone types in the computation of elementary multiplicative functions.

In the meanwhile, we provide a drawing of (part of) the Cayley graph of the genus $2$ surface group
(octagons group)  in a form that we believe new and hopefully useful,
 so to facilitate the reader to imagine a figure that is not easy to describe in a drawing.

\section{Cannon cone types}\label{cannon}
To fix notations we recall some basic concepts on hyperbolic groups in the sense
of Gromov, such as
distance, length, geodesic, Cayley graph,  etc...
referring to  Ohshika's book \cite{Oh} for more details.

\begin{defn}
Given a group $G,$ a subset $A\subset G$ is called a generator
system of $G$ if every element of $G$ is expressed as a product of
elements of $A$. $G$ is said finitely generated if it has
a finite generator system.
\end{defn}

In this paper we assume that a generator system of $G$ is symmetric, i.e. closed under inverses.

If $G$ has a generator system $A$, then there is  a canonical
surjective group homomorphism $p:F(A)\rightarrow G$,
whose kernel  is called the set of relators.
Here $F(A)$ is the free group on $A$,
identified with the set of  reduced words on $A$, i.e., words
in which an element and its inverse are not juxtaposed. We consider the identity as the empty word.

If $R\subset G$ is a subset, then we denote by $<<R>>$ the normal closure of
$R$ in $G$, which is the intersection of all normal subgroups containing $R$.
Intuitively, this is the smallest normal subgroup containing $R$.
It is easy to see that the elements of $<<R>>$ are
$$g_1x^{n_1}_1 g^{-1}_1 g_2x^{n_2}_2 g^{-1}_2 \dots g_k x^{n_k}_k g^{-1}_k,$$
for $n_1,\dots, n_k\in\mathbb{Z},$ $x_1\dots x_k\in R$, and $g_1,\dots, g_k\in G$ (not
necessarily distinct).

\begin{defn}
If  $G$ has a finite generator system $A,$ we say that G is finitely presented
if there is a finite set $R=\{w_1, \dots, w_n\}\subset \ker{p}\subset F(A)$
such that $<<R>>=\ker{p}.$ Hence $G \cong F(A)/<<R>>$. In that case we write
$G=\langle A|R\rangle$ and we call such a presentation of $G$ a finite presentation.
The words $w_1,..., w_n$ are called relators.
\end{defn}

The fundamental group, $\Gamma_g,$ of a compact
surface of genus $g \geq 2$, is a finitely presented group. Its usual
presentation is
$$
\Gamma_g =\langle a_1,\dots,a_g,b_1,\dots,b_g | [a_1,b_1]\cdot\cdot\cdot
[a_g, b_g] \rangle.
$$
where the bracket means the usual commutator $[a,b]=aba^{-1}b^{-1}.$

In this paper we shall deal manly with $g=2,$ and in this case, for simplicity,
we shall write and  fix the set of generators as follows
\begin{equation}\label{gamma2}
\Gamma_2 =\langle a,b,c,d | [a,b][c, d] =aba^{-1}b^{-1}cdc^{-1}d^{-1}\rangle.
\end{equation}

\begin{defn}
Let $G$ be a discrete, finitely generated group with a finite generator system
$A$.  The Cayley graph $\mathcal{G}$ of $G$ with respect to
$A$ is a graph  defined as follows.
\begin{enumerate}
\item The vertices of $\mathcal{G}$  are the elements of $G$.
\item The (unoriented) edges are (non-ordered) couples $(x,xa)$ with $a \in A$.
\end{enumerate}
\end{defn}

We can introduce a metric, denoted by $d$, on a Cayley graph
by letting the length of every edge be $1$ and defining the distance
between two vertices to be the minimum length of edges joining them.

The metric on $\mathcal{G}$  induces a metric
on $G$ when the latter is identified
with the set of vertices of $\mathcal{G}$. We call this metric on $G$
(still denoted by $d$) the word
metric with respect to $A$. In particular, for $x\in G$, we call the distance from
the identity ``e'', with respect to the word metric, the length of $x$, and we
denote it by $|x|$.

We have also that, for all $x,y,z\in G$,  $d(xy,xz)=d(y,z).$

\begin{defn}
A geodesic segment joining two vertices $x ,y$ in $\mathcal{G}$
(or, more briefly, a geodesic from $x$ to $y$) is a map $f$ from a closed interval $[0, l]\subset\mathbb{R}$
to $\mathcal{G}$ such that $f(0) = x$, $f(l) = y$ and $d(f(t), f(s)) = |t- s|$ for all $t, s \in [0, l]$ (in
particular $l= d(x, y)$. When there is
no  confusion, we also call the image of $f$
a geodesic segment with endpoints $x$ and $y$ and we denote it by $\overline{xy}$. We should note that such a geodesic
segment need not be unique.

If in the geodesic segment $\overline{xw}$ we have $x=e$ and
$w=y\in G$, we say that $w$ is a geodesic word (representing $y$). In this case
$d(e,y)=|w|$.
\end{defn}

\begin{defn} Given three points $x,y,z\in \mathcal{G}$, a  geodesic
triangle $\Delta_{x,y,z}$ on $\mathcal{G}$ with vertices $x, y, z,$ is formed by
three  geodesic  segments $\overline{xy},\overline{yz},\overline{zx}.$
 The group $G$ is said hyperbolic if there exists a constant $\delta > 0,$
depending only on $G$, such that, for any geodesic triangle $\Delta_{x,y,z},$ one has that each $u\in\overline{xy}$ is at distance at
most $\delta$ from $\overline{zx}\cup\overline{yz}.$
\end{defn}
One should note that the constant $\delta$ in the previous definition has no much
importance except in the case $\delta=0$ ($\mathbb{R}$-trees).

\begin{rem}
 $\Gamma_g$ is hyperbolic.
\end{rem}

Its Cayley graph, $\mathcal{G}$, is a planar graph, a tessellation of the hyperbolic space $\mathbb{H}^2$ with the following properties:
\begin{enumerate}
\item Every vertex belongs to $4g$ polygons each of $4g$ edges and
vertex angle $\frac{2\pi}{4g};$
\item Every two polygons share one (and only one) edge;
\item $\mathcal{G}$ is bipartite and self-dual.
\end{enumerate}

\begin{defn}
Given any two vertices $ x, y \in\mathcal{G}$ we say that $y$ is a successor
of $x$ if $(x, y)$ is an edge and $|y| = |x|+ 1.$ In this case $x$ is called a predecessor of $y$.
\end{defn}

A simple realization for (part of) the Cayley graph of $\Gamma_2$ is given in
Figure \ref{fig:ottagono1}, available also at
\url{https://www.geogebra.org/m/Jqayn5UZ}

Its center vertex is the identity ``e'', and, given any vertex, any  successor is
obtained by
juxtaposing generators in counterclockwise verse in this order
\begin{equation}\label{ordine}
a,\quad d,\quad c^{-1},\quad d^{-1},\quad c,\quad b,\quad a^{-1},\quad b^{-1}.
\end{equation}
\begin{figure}[ht]
	\includegraphics[scale=0.78]{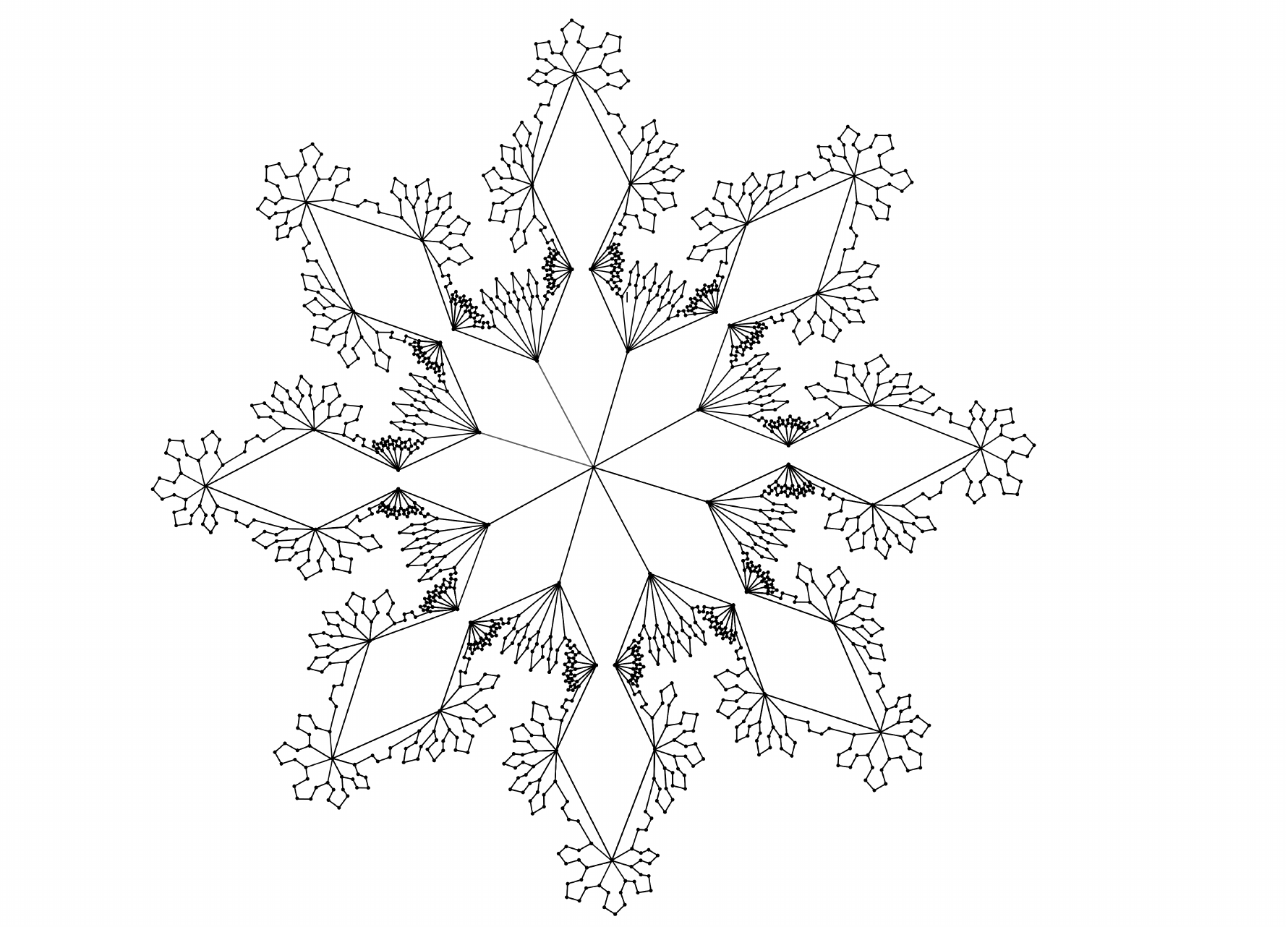}
	\caption{The Cayley graph of $ \Gamma_2$ (part of). }
	\label{fig:ottagono1}
\end{figure}

We now give the definition of cone type for elements of $G$.

\begin{defn}[\cite{BH}]\label{bridson} Let $G$ be a group with finite generating
set $A$ and corresponding word metric $d$. 		
											
The cone type of an element $x\in G,$ denoted by $\mC(x)$, is the set of words $z\in F(A)$ such that
$$d(e, x z) = d(e, x) + |z|,$$
(hence $d(e, x z) = d(e, x) + |z|=d(e,x)+d(x,xz)$). 		
							
In other words, if $x$ is represented by a geodesic word $u$, then the cone type of
$x$ is the set of words $z$ such that $uz$ is also a geodesic.	
\end{defn}

An alternative definition of cone type  involves the cone of
a vertex.
\begin{defn}[\cite{G}]
The cone of a vertex  $x\in \mathcal{G}$, ${\mC}(e,x),$
is the set of vertices  $y\in \mathcal{G}$ for which there is a
geodesic from $e$ to $y$ going through $x$.	
	
The cone type of $x$ is then defined as the set
	 $$\{x^{-1} y, \;\text{for}\; y \in {\mC}(e,x)\}=x^{-1}{\mC}(e,x).$$									
\end{defn}

We see that the two definitions coincide once we identify $G$ as the vertex set
of $\mathcal{G}$, since if $z\in F(A)$ and $d(e, x z) = d(e, x) + |z|$, then
$z=x^{-1}xz$ and $xz\in {\mC}(e,x),$ since $|z|=d(x,xz).$
On the other hand, if $y\in{\mC}(e,x),$ then
$$d(e,x x^{-1}y)=d(e,y)=d(e,x)+d(x,y)=d(e,x)+d(e, x^{-1}y).$$

Next we consider the definition of cone type for a cone, which relies on
the action by isometries of the group $G$ on its Cayley graph, this action is
simply transitive on the vertices.
\begin{defn}\label{tk}
Given two vertices  $x, y \in \mathcal{G}$ the cone at vertex $y$ is
$$\mC(x, y) = \{z \in G ,\, d(x, z) = d(x, y) + d(y, z)\}.$$

The group $G$ acts on the the collection of cones by (left) translation
$$ z\mC(x, y)=\mC(zx, zy),\quad z\in G.$$

We say that two cones have the same type if they are in the same orbit.
We may as well identify the
set of cone-types with the set of cones $\mC(x, e)$ whose vertex is
$e \in G$.
\end{defn}

From
\begin{eqnarray*}
\mC(x)& =& \{z \in G ,\; d(e, xz) = d(e, x) + d(x, xz)\}=x^{-1}{\mC}(e,x)\\
&  =&
\{z \in G ,\; d(x^{-1},z) = d(x^{-1},e) + d(e,z)\}\\
&= &\mC(x^{-1}, e),
\end{eqnarray*}
 we see that  the cone type of $x$, $\mC(x)$, is a representative of
the cone type of $\mC(x^{-1}, e)$, and the latter cone has the same cone type of
${\mC}(e,x)$ (same orbit).

A free group of rank $m$ has $2m + 1$ cone types with respect to any
set of free generators. For more general hyperbolic groups we can refer to
a result due to Cannon:
\begin{thm}[Theorem 2.18, \cite{BH}]\label{can}
If a group $G$ is hyperbolic, then it has only finitely many cone types
(with respect to any finite generating set).	
\end{thm}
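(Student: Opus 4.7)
The strategy is to show that $\mC(x)$ depends only on a bounded amount of local data at $x$. For a constant $K$ to be chosen later (depending on $\delta$), define the $K$-type of $x$ to be the map $\tau_K(x) : B_{F(A)}(e,K)\to\mathbb{Z}$ sending a word $w$ with $|w|\leq K$ to $|xw|-|x|$. Since $|\tau_K(x)(w)|\leq K$, the $K$-type takes values in a finite set, and hence only finitely many $K$-types exist. It therefore suffices to prove: there exists $K=K(\delta)$ such that $\tau_K(x)=\tau_K(y)$ implies $\mC(x)=\mC(y)$.

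Granting this, the equality of cone types would follow by induction on the length of $z\in\mC(x)$. The inductive step reduces to the \emph{propagation claim}: for $K$ large enough, if $\tau_K(x)=\tau_K(y)$ and $a\in A$ satisfies $\tau_K(x)(a)=1$ (i.e.\ $|xa|=|x|+1$, so $a\in\mC(x)$), then $\tau_K(xa)=\tau_K(ya)$. Iterating this along any candidate extension $z=a_1a_2\cdots a_n$, the $K$-type is preserved at each step, so the decision ``does appending $a_{i+1}$ keep us in the cone'' is made identically at $x$ and $y$, forcing $\mC(x)=\mC(y)$.

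The main obstacle is the propagation claim, the sole genuinely geometric input. The trivial bound $||xaw|-|xa||\leq |w|$ only shows that $\tau_K(xa)$ is determined by $\tau_{K+1}(x)$, and iterating naively would require unboundedly growing balls. Hyperbolicity breaks this: applied to the triangle with vertices $e$, $xa$, $xaw$, the thin-triangle condition forces any ``shortcut'' from $e$ to $xaw$ to track the geodesic $[e,xa]$ within distance $\delta$ near the tip $xa$. Consequently, whether $|xaw|<|xa|+|w|$ is determined by the geometry in a ball of radius $K+O(\delta)$ around $xa$, which is already encoded in $\tau_K(x)$ once $K$ exceeds a threshold depending on $\delta$ (roughly $2\delta+1$). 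The careful bookkeeping of which of the three sides of the thin triangle absorbs the potential shortcut is the classical content of Cannon's argument, as laid out in the proof of Theorem~2.18 of~\cite{BH}.
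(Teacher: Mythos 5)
Your proposal is correct and follows the same classical Cannon strategy as the paper --- replace the cone type by bounded-radius data, note that only finitely many such data exist, and use $\delta$-thin triangles to show the data determine the cone type --- but your decomposition is genuinely different. The paper (quoting \cite{BH}) uses the coarser invariant of the $r$-level $\{h : |h|\le r,\ |gh|<|g|\}$ with $r=2\delta+3$, and the key lemma there is proved by induction on the length of a word in the cone, extracting at each failure a short witness $h$ in the level; the level itself is never shown to propagate. You instead take the full increment function $\tau_K(x)(w)=|xw|-|x|$ (the $k$-type in the sense of Neumann--Shapiro) and let the induction carry the invariant itself along the word via the propagation claim $\tau_K(xa)=\tau_K(ya)$. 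Be aware that this claim is strictly stronger than the lemma proved in \cite{BH}: elements with the same cone type or level need not share the same $\tau_K$ (already in a free group, $\tau_K$ records the last $K$ letters while the cone type records only the last one), so your closing citation does not literally cover it --- though the same thin-triangle bookkeeping does prove it for $K$ of order $2\delta+3$, and the paper's own proof equally defers its key lemma to \cite{BH}. What your route buys is the deterministic transition structure of types under successors, i.e.\ exactly the finite-state machine abstractly underlying the matrix of cone types constructed later in the paper; what the paper's route buys is a smaller invariant and a weaker lemma sufficing for bare finiteness.
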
	
\begin{proof}
 The proof is based on the following result:

Let $r\geq 1$ be an integer.
Define the $r$-level of $g \in G $ as the set of elements $h$ satisfying the
following
$$
|h|\leq r \;\text{and}\; |gh| < |g|.
$$

If the Cayley graph of $G$ is $\delta$-hyperbolic,
the constant $r=2\delta+3$  is
such that
if two elements $g_1$ and $g_2$ have the same $r$-level, then the two
cone types of  $g_1$ and $g_2$ are the same.
\end{proof}

\section{An algebraic criterion}\label{how}

In order to determine all the possible cone types in $\Gamma_g$, we shall
establish an algebraic criterion to determine any element's cone type
(which is equivalent, as shown in the previous section, to determine the cone type
of each cone);
this will lead us also to a proof for the
well known fact that there are exactly $8g(2g-1)$ cone types in $\Gamma_g,$
besides the cone type of the identity element.
As far as we know, no proof of this fact is available in the literature.

It is important for us to determine the exact cone types of an element (and
  of each successor), in order to encode this informations in
 a matrix  useful for  algorithmic computations.

We focus on $\Gamma_2,$ the general case being similar.

We need first some Lemmas.
\begin{lem}\label{lemma1}

 If $w_1 w_2 \dots w_n$ is a geodesic word where each $w_i$ is a generator, then
$${\mC} (w_1 w_2 \dots w_n)\subset{\mC} (w_2 \dots w_{n})\subset \dots
\subset {\mC}(w_n).$$

\end{lem}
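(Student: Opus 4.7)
The plan is to peel off letters from the left one at a time; the statement is a chain of $n-1$ inclusions, each of the same form, so it suffices to prove the single step $\mathcal{C}(u) \subset \mathcal{C}(v)$, where $u := w_1 w_2 \cdots w_n$ and $v := w_2 \cdots w_n$. The remaining inclusions then follow by relabeling and iteration.

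A preliminary observation I would record is that every suffix of a geodesic word is itself geodesic. Indeed, if some shorter word $v'$ represented the group element $v$, then $w_1 v'$ would be a word of length strictly less than $n$ representing $u$, contradicting the geodesicity of $u$. Consequently $|v| = n - 1$ as a group element.

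Given this, the inclusion reduces to a two-sided triangle-inequality estimate. Let $z \in \mathcal{C}(u)$, so by Definition~\ref{bridson}
$$d(e, u z) \;=\; |u| + |z| \;=\; n + |z|.$$
The goal is $d(e, v z) = |v| + |z| = n - 1 + |z|$. Subadditivity of the word length gives the upper bound $d(e, v z) \leq |v| + |z| = n - 1 + |z|$. For the matching lower bound, the identity $u z = w_1 \cdot v z$ implies $d(u z, v z) \leq |w_1| = 1$, so the triangle inequality gives
$$d(e, v z) \;\geq\; d(e, u z) - d(u z, v z) \;\geq\; (n + |z|) - 1 \;=\; n - 1 + |z|.$$
The two bounds agree, so $z \in \mathcal{C}(v)$, and iterating the single-step inclusion $n-1$ times yields the full chain.

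There is no real obstacle here: the lemma is essentially an encoding of the left-invariance of the word metric together with the inheritance of geodesicity by subwords, and the proof uses no property of $\Gamma_2$ (or even of hyperbolic groups) beyond the basic metric properties of the Cayley graph. The only mild point to watch is that in invoking Definition~\ref{bridson} one must use the geodesicity of $v$ to identify $|v|$ with $n-1$, which is exactly why the preliminary observation is isolated first.
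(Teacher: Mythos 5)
Your reduction to the single inclusion $\mathcal{C}(u)\subset\mathcal{C}(v)$, your observation that suffixes of geodesic words are geodesic, and your upper bound $d(e,vz)\le (n-1)+|z|$ are all correct, and they are the same ingredients the paper uses (the paper runs the estimate contrapositively: $z\notin\mathcal{C}(w_2\cdots w_n)$ implies $z\notin\mathcal{C}(w_1\cdots w_n)$). But your lower bound contains a genuine error: from the identity $uz=w_1\cdot(vz)$ you conclude $d(uz,vz)\le|w_1|=1$. That implication would require \emph{right}-invariance of the metric, whereas the word metric of the paper's Cayley graph (edges $(x,xa)$) is \emph{left}-invariant, $d(xy,xz)=d(y,z)$, so that $d(g,h)=|g^{-1}h|$. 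Hence $d(uz,vz)=|(vz)^{-1}w_1^{-1}(vz)|$ is the length of a conjugate of a generator, which need not be $1$. Concretely, take $u=ab$, $v=b$, and $z$ the empty word: then $d(uz,vz)=d(ab,b)=|b^{-1}a^{-1}b|=3$, both in a free group and in $\Gamma_2$ (a reduced word of length $3$ cannot be shortened by the length-$8$ relator). So the asserted distance bound is false, even though the displayed inequality you derive from it happens to be true.

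The repair is one line and turns your argument into precisely the paper's: instead of comparing the points $uz$ and $vz$, run the triangle inequality through the vertex $w_1$ and then use left-invariance, namely $d(e,uz)\le d(e,w_1)+d(w_1,w_1\cdot(vz))=1+d(e,vz)$, which is just the subadditivity $|w_1g|\le 1+|g|$ of the length. This gives $d(e,vz)\ge d(e,uz)-1=(n-1)+|z|$, matching your upper bound and completing the step. With that substitution your direct proof and the paper's contrapositive proof are the same argument read in opposite directions; no property of $\Gamma_2$ beyond left-invariance and the triangle inequality is needed, exactly as you say.
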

\begin{proof} It is sufficient to show the first inclusion.

If $z\notin {\mC} (w_2 \dots w_{n})$, since $w_2 \dots w_{n}$ is a geodesic word,
$$d(e, w_2 \dots w_{n} z) < d(e, w_2 \dots w_{n}) + |z| = n-1 + |z|$$
implies
\begin{eqnarray*}
d(e, w_1 w_2 \dots w_{n} z) &\leq &d(e,w_1) + d(w_1, w_1 w_2 \dots w_{n} z) \\
&=& d(e,w_1) + d(e,  w_2 \dots w_{n} z) \\
&< & 1+n-1 + |z|
=d(e, w_1 w_2 \dots w_{n}) + |z|.
\end{eqnarray*}

Hence $z\notin {\mC}(w_1 w_2 \dots w_{n}).$
\end{proof}

If we look for a proof of the opposite inclusion, we need to consider either any cyclic
permutation of the relator
$$[a,b][c,d]=aba^{-1}b^{-1}cdc^{-1}d^{-1},$$
or of its inverse.
To be short, we say that an element of $\Gamma_2$  belongs to $\mR$ if it is
 represented by a sub-word of a cyclic permutation of the relator or of its inverse, or both
(for example $ba^{-1}b^{-1}c\in \mR$ while $ba^{-1}b^{-1}a\notin \mR$).
Note that  elements in $\mR$ have length at most $4$.
Also, from now on, when considering a geodesic word, say $u_1 u_2\dots u_n$, we
intend that each $u_i$ is a generator.

\begin{lem}\label{lemmatutto}
Let $u_1,\dots,u_i$ be one, two or three generators,  so that $i=1,2,3.$
 Let $y\in \Gamma_2$, such that $u_1\dots u_i\in{\mathcal C}(y).$
Assume that
for {\bf any geodesic word} ${ w_1\dots w_J},$  such that $y=w_1\dots w_J$, the (geodesic) word
 $w_J u_1\dots u_i$ does not belong to $\mR.$
Then:
\begin{enumerate}
\item If $z$  is such that $u_1\dots u_i z$ is a geodesic word, then
for any geodesic $ w_1\dots w_J$ such that $y = w_1\dots w_J,$ the word\\
$w_1\dots w_J u_1\dots u_i z$ is  geodesic, too.
\item ${\mC}(u_1\dots u_i)\subset{\mC}(y u_1\dots u_i),$  and so
${\mC}(u_1\dots u_i)={\mC}(y u_1\dots u_i).$
\end{enumerate}
\end{lem}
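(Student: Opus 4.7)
The plan is to reduce the statement to Dehn's algorithm for $\Gamma_2$: a word in the generators is geodesic if and only if it contains no sub-word of length at least $5$ (i.e.\ more than half the length of the relator $[a,b][c,d]$) appearing inside a cyclic permutation of the relator or of its inverse. Equivalently, any non-geodesic word contains such a ``long'' sub-word which can be replaced by its shorter complement (of length $\leq 3$) to strictly shorten the word. This is the only geometric input; everything else is a combinatorial bookkeeping at the junction between $w_1 \dots w_J$ and $u_1 \dots u_i z$.

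For part (1), I fix an arbitrary geodesic factorization $w_1 \dots w_J$ of $y$ and argue by contradiction. Assume that $w_1 \dots w_J u_1 \dots u_i z$ fails to be geodesic; Dehn's algorithm then produces a sub-word $v$ of length $\geq 5$ that lies inside a cyclic permutation of the relator or of its inverse. Now $w_1 \dots w_J u_1 \dots u_i$ is geodesic (because $u_1 \dots u_i \in \mathcal{C}(y)$) and $u_1 \dots u_i z$ is geodesic (by hypothesis), so $v$ cannot be contained entirely in either block. Consequently $v$ must straddle the junction: it starts at some position $\leq J$ and ends at some position $> J+i$. In particular $v$ contains $w_J u_1 \dots u_i$ as a consecutive sub-word, and therefore $w_J u_1 \dots u_i$ is itself a sub-word of a cyclic permutation of the relator or of its inverse, i.e.\ $w_J u_1 \dots u_i \in \mathcal{R}$. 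This contradicts the standing hypothesis, and since the argument works for any geodesic factorization of $y$, conclusion (1) follows.

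Part (2) is then a short consequence. The inclusion $\mathcal{C}(u_1 \dots u_i) \subset \mathcal{C}(y u_1 \dots u_i)$ is immediate from (1): if $z \in \mathcal{C}(u_1 \dots u_i)$, then $u_1 \dots u_i z$ is geodesic, so $w_1 \dots w_J u_1 \dots u_i z$ is geodesic by (1), and reading off lengths gives $|y u_1 \dots u_i z| = |y u_1 \dots u_i| + |z|$, i.e.\ $z \in \mathcal{C}(y u_1 \dots u_i)$. The reverse inclusion, needed for the equality, is obtained by applying Lemma~\ref{lemma1} to the geodesic word $w_1 \dots w_J u_1 \dots u_i$ and iteratively peeling off the letters $w_1, w_2, \dots, w_J$.

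The main obstacle, as I see it, is pinning down Dehn's algorithm in exactly the form required (i.e.\ that a non-geodesic word of $\Gamma_2$ must contain a sub-word of length $\geq 5$ lying in some cyclic rotation of the relator or its inverse) and stating it compatibly with the paper's convention that $\mathcal{R}$ consists only of elements of length $\leq 4$. Once that dictionary is in place, the combinatorial contradiction is essentially forced: the ``bad'' sub-word $v$ has nowhere to live except straddling the $w_J \,|\, u_1$ boundary, where it is compelled to engulf all of $w_J u_1 \dots u_i$, which is precisely the configuration excluded by the hypothesis.
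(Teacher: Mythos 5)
Your proposal is correct and takes essentially the same route as the paper: the paper's proof is likewise by contradiction, using the fact that a non-geodesic word must contain either a cancelling pair $ss^{-1}$ or a subword of length at least $5$ of a cyclic permutation of the relator or its inverse, then forcing that subword to straddle the junction (since both $w_1\dots w_J u_1\dots u_i$ and $u_1\dots u_i z$ are geodesic and $i\leq 3$) and hence to contain $w_J u_1\dots u_i$, contradicting the hypothesis that $w_J u_1\dots u_i\notin\mathcal{R}$; part (2) is deduced from part (1) together with Lemma~\ref{lemma1} exactly as you do. The only nuance is that your stated geodesicity criterion should also require free reducedness (the $ss^{-1}$ case, which the paper lists explicitly), but this omission is harmless here, since any length-two subword lies entirely inside one of the two geodesic blocks and is therefore already excluded by your junction argument.
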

\begin{proof} The second sentence follows from the first one and Lemma \ref{lemma1}.
Assume, on the contrary, that there exists a geodesic word $w_1\dots w_J=y$
such that $w_1\dots w_J u_1\dots u_i z$ is not a geodesic word. Then it contains
either couples like $ss^{-1}$ ($s$ being a generator)  or a sequence of (at least)
 $5$ generators in a
cyclic permutation of the relator $[a,b][c,d]$ (or its inverse), or both.

The first case is excluded, since both $w_1\dots w_J u_1\dots u_i$ and
$u_1\dots u_i z$ are geodesic words.

In the second case, since both $w_1\dots w_J u_1\dots u_i$ and $u_1\dots u_i z$
are geodesic words, and $i\leq 3,$
 we get that
the sequence of $5$  must contain $w_J u_1\dots u_i$ against our assumption.

\end{proof}
\begin{ex}
Applying the Lemma \ref{lemmatutto} to points such as $x=bc,$ we get
${\mC} (c)= {\mC}(bc).$

For points such as $x=aba,$ it yields
${\mC} (aba)= {\mC}(ba).$ Note that $ba\in\mR$
and so the procedure stop here.

Consider now points such as $x=abcd.$  In this case
neither $bcd$ nor $abcd$ belong to $\mR$
so we get
$${\mC}(abcd)={\mC}(bcd)={\mC}(cd).$$

The above lemma does not apply to $x=dcd^{-1}c^{-1}a^{-1}dc;$ in this case note
that $c^{-1}a^{-1}dc\notin\mR$
but since
$$x=dcd^{-1}c^{-1}a^{-1}dc = aba^{-1}b^{-1}a^{-1}dc,$$
and $b^{-1}a^{-1}dc\in\mR,$
the cone type
of $x$ is not the same of $a^{-1}dc.$
\end{ex}
Finally we have
\begin{lem}\label{lemma4}
Let $u_1, u_2, u_3,u_4$ be four generators, such that $u_1 u_2 u_3 u_4$ is a
geodesic word in $\mR.$

Let $y\in \Gamma_2$, such that $u_1 u_2 u_3 u_4\in{\mathcal C}(y).$
We have:
\begin{enumerate}
\item If $z$  is such that $u_1 u_2 u_3 u_4 z$ is a geodesic word,
then for any geodesic word $ w_1\dots w_J$ such that $y=w_1\dots w_J,$ we get
that $w_1\dots w_J u_1 u_2 u_3 u_4 z$ is a geodesic word, too.
\item 
 ${\mC}(u_1 u_2 u_3 u_4) = {\mC}(yu_1 u_2 u_3 u_4).$
\end{enumerate}
\end{lem}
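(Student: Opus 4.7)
The plan is to model the argument on that of Lemma~\ref{lemmatutto}. Since~(2) is an immediate consequence of~(1) together with Lemma~\ref{lemma1}---the inclusion ${\mathcal C}(y u_1 u_2 u_3 u_4)\subset {\mathcal C}(u_1 u_2 u_3 u_4)$ coming directly from Lemma~\ref{lemma1} applied iteratively to $w_1\dots w_J u_1 u_2 u_3 u_4$, and the reverse inclusion being the content of~(1)---I focus on proving~(1) by contradiction.

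Suppose that, for some geodesic decomposition $y = w_1 \dots w_J$, the word $w_1 \dots w_J u_1 u_2 u_3 u_4 z$ fails to be geodesic. In $\Gamma_2$ this failure must manifest itself locally, either as a cancelling pair $s s^{-1}$ of juxtaposed generators, or as a block of at least five consecutive generators forming a sub-word of a cyclic permutation of the relator $[a,b][c,d]$ or of its inverse (which would shorten the word upon replacement by the complementary $3$-letter sub-word). The first failure is ruled out exactly as in Lemma~\ref{lemmatutto}: the hypothesis $u_1 u_2 u_3 u_4 \in {\mathcal C}(y)$ makes $w_1 \dots w_J u_1 u_2 u_3 u_4$ a geodesic, hence freely reduced, word of length $J+4$, while the assumption on $z$ makes $u_1 u_2 u_3 u_4 z$ freely reduced as well; any cancelling pair would necessarily lie inside one of these.

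It remains to exclude a $5$-block of relator-letters in the big word. Here I would carry out a position count in $w_1 \dots w_J u_1 u_2 u_3 u_4 z_1 z_2 \dots$: since $u_1 u_2 u_3 u_4$ occupies exactly four consecutive positions, any $5$-block containing both a letter from the $w$-part and a letter from the $z$-part would have to span at least $1 + 4 + 1 = 6$ positions, which is impossible. Hence the $5$-block must sit entirely inside $w_1 \dots w_J u_1 u_2 u_3 u_4$ or entirely inside $u_1 u_2 u_3 u_4 z$; but both of these are geodesic words, and so neither can contain five consecutive letters of a cyclic relator, and we reach a contradiction. The main delicate step is this position count, and it is precisely the fact that $u_1 u_2 u_3 u_4$ has already attained the maximal length $4$ for an element of ${\mathcal R}$ that allows the argument to work without the extra hypothesis ``$w_J u_1 \dots u_i \notin {\mathcal R}$'' required in Lemma~\ref{lemmatutto}.
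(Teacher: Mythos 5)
Your proof is correct and takes essentially the same route as the paper's: the same dichotomy (a cancelling pair $ss^{-1}$ versus a block of at least five relator letters), with the pair excluded by the geodesicity of the two overlapping sub-words, and the five-block excluded by the same count you perform---the paper phrases your position count as the observation that such a block would have to contain $w_J u_1 u_2 u_3 u_4$, contradicting that $w_1\dots w_J u_1 u_2 u_3 u_4$ is geodesic. Your closing remark, that it is precisely the maximality of the length $4$ in $\mathcal{R}$ that dispenses with the hypothesis ``$w_J u_1\dots u_i\notin\mathcal{R}$'' of Lemma~\ref{lemmatutto}, makes explicit a point the paper leaves implicit, but it is not a different argument.
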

\begin{proof}
The second sentence follows from the first one and Lemma \ref{lemma1}.
Assume, on the contrary, that there exists a geodesic word $w_1\dots w_J=y$
such that $w_1\dots w_J u_1 u_2 u_3 u_4 z$ is not a geodesic word, then it contains
either couples like $s s^{-1}$ ($s$ being a generator)  or a sequence of (at least)
 $5$ generators in a
cyclic permutation of the relator $[a,b][c,d]$ (or its inverse), or both.
\\
The first case is excluded, since both $w_1\dots w_J u_1 u_2 u_3 u_4$ and
$u_1 u_2 u_3 u_4 z$ are geodesic words.

In the second case, since  $u_1 u_2 u_3 u_4 z$ is a geodesic words,
 and since $u_1 u_2 u_3 u_4\in\mR,$
we get that
the sequence of $5$  must contain $w_J u_1 u_2 u_3 u_4$ against the fact that $w_1\dots w_J u_1 u_2 u_3 u_4$ is a
geodesic word.
\end{proof}

\begin{ex}
Consider again
$x=dcd^{-1}c^{-1}a^{-1}dc = aba^{-1}b^{-1}a^{-1}dc.$
Since $b^{-1}a^{-1}dc\in\mR,$ the cone type
of $x$ is  the same of $b^{-1}a^{-1}dc.$
\end{ex}

The following lemma states the uniqueness of cone types for elements in $\mR$.
\begin{lem}\label{uni}
Elements in $\mR$ (of length at most $4$)  have distinct cone types.

Hence, if $x,z\in\mR$ and $\mathcal{C}(x)=\mathcal{C}(y),$ then $x=y$.
\end{lem}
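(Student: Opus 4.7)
The plan is to prove Lemma~\ref{uni} by exhibiting, for any two distinct elements $x, y \in \mR$, an explicit word $z$ that lies in exactly one of the cone types $\mC(x)$ and $\mC(y)$. The driving principle is that $\mC(x)$ records the short forbidden continuations of $x$, and these continuations are governed by the position of $x$ along a cyclic permutation of the relator $R = [a,b][c,d]$ or of its inverse.

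First I would establish a structural fact: since $\Gamma_2$ admits a Dehn algorithm with respect to the given presentation (a reduced word is non-geodesic iff it contains a sub-word of length $\geq 5$ that is a sub-relator of a cyclic permutation of $R^{\pm 1}$), every reduced word of length at most $4$ is automatically geodesic. Consequently, for $x \in \mR$ of group length $k \in \{1,2,3,4\}$, the shortest nontrivial forbidden extensions have length $5 - k$: taking the next $5 - k$ letters of any cyclic permutation of the relator containing $x$ completes $x$ into a $5$-letter sub-relator, making $xz$ non-geodesic. Each element has one or two such canonical extensions, according as it has one or two sub-word representatives (one for $k \in \{2,3\}$; two for $k \in \{1,4\}$, the latter via the identity $[a,b] = [c,d]^{-1}$).

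The argument then splits into cases. For $|x| = |y| = 4$: each such element has exactly two forbidden length-$1$ continuations, namely the inverses of the last letters of its two representatives, and an explicit enumeration of the eight length-$4$ elements of $\mR$ shows these eight unordered pairs of generators are all distinct. For $|x| < |y|$ with distinct last letters, take $z$ to be the inverse of $y$'s last letter: this cancels with $y$ but leaves $xz$ reduced of length at most $4$, hence geodesic. For $|x| < |y|$ with the same last letter, take $z$ to be a canonical $(5 - |y|)$-letter extension of $y$; reducedness of the relator ensures $z$ does not start with the inverse of $x$'s last letter, so $xz$ is reduced of length at most $4$ (hence geodesic) while $yz$ is a $5$-letter sub-relator (non-geodesic). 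For $|x| = |y| \in \{2, 3\}$ sharing a last letter, $x$ and $y$ must come from cyclic permutations of $R$ versus $R^{-1}$ (since each length-$k$ element has a unique sub-word representative), and one uses the canonical extension of $x$; the remaining sub-cases ($|x| = |y| = 1$, and same-length with differing last letters) are handled by one-letter $z$'s.

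The main obstacle is verifying, in the cases relying on a canonical extension, that the other element's concatenation remains geodesic. The key combinatorial fact is that the eight $4$-letter sub-words of cyclic permutations of $R$ are pairwise disjoint, as strings, from the eight $4$-letter sub-words of cyclic permutations of $R^{-1}$; this can be checked even at the level of $2$-letter prefixes. This disjointness forces the relevant $5$-letter concatenations to fail to be sub-relators of either $R$ or $R^{-1}$, whence the structural fact guarantees geodesicity. All remaining verifications reduce to Lemma~\ref{lemma1} together with the structural fact.
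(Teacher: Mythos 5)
Your proposal is correct, but it takes a genuinely different --- and substantially more complete --- route than the paper's. The paper's own proof of Lemma~\ref{uni} is a one-example sketch: it asserts that for any pair of words in $\mR$ one can ``easily'' exhibit an explicit distinguishing element, and shows only one instance ($b^{-1}a^{-1}d\in\mC(a)$ but $b^{-1}a^{-1}d\notin\mC(ba)$, since $bab^{-1}a^{-1}d=cdc^{-1}$), in effect deferring an exhaustive check over all $\binom{48}{2}$ pairs to the reader. You instead organize the verification by length and last letter and reduce it to a few uniform mechanisms: the Dehn-type geodesic criterion (a reduced word is non-geodesic iff it contains at least five consecutive letters of a cyclic permutation of the relator or its inverse --- precisely the fact the paper itself invokes without proof inside Lemmas~\ref{lemmatutto} and~\ref{lemma4}, so you are on the same footing there), canonical forbidden extensions of length $5-k$, the two representatives of a quadruple (anticipating the ``twin'' notion the paper introduces only in Section~\ref{multi}), and the piece-length-one fact that the cyclic words for $[a,b][c,d]$ and its inverse share no common subword of length $2$ (the geometric counterpart of Remark~\ref{more}: two octagons share at most one edge). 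Your case analysis does hold up under scrutiny --- in particular the eight unordered pairs of forbidden one-letter continuations of the quadruples, namely $\{b,c\}$, $\{c^{-1},d\}$, $\{d^{-1},c^{-1}\}$, $\{c,d^{-1}\}$, $\{d,a\}$, $\{a^{-1},b\}$, $\{b^{-1},a^{-1}\}$, $\{a,b^{-1}\}$ in the order of Table~\ref{tab}, are indeed pairwise distinct. What your route buys is that the only residual finite checks are this list and the disjointness of the sixteen two-letter subwords, and the argument visibly generalizes to $\Gamma_g$, consistent with the count $8g(2g-1)$ of Theorem~\ref{main}; what the paper's version buys is brevity. One small point to tighten: for a quadruple $y$ the phrase ``last letter'' is ambiguous, since $y$ has two string representatives; but as their last letters are distinct, when $|x|<|y|=4$ you can always choose a representative of $y$ whose last letter differs from that of $x$, so your distinct-last-letter (one-letter $z$) branch always applies there, and the canonical-extension branch is genuinely needed only for $|y|\in\{2,3\}$ --- worth stating explicitly in the writeup.
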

\begin{proof} For any couple of words $x,z\in\mR$
it is easy to explicitly provide an element $y\in{\mathcal C}(x)$ not in
$\mathcal{C}(z)$ and vice versa. For example $b^{-1}a^{-1}d\in {\mathcal C}(a)$
while $b^{-1}a^{-1}d\notin {\mathcal C}(ba),$ since $bab^{-1}a^{-1}d=c d c^{-1}.$
\end{proof}

\begin{lem}\label{suff}
Let $x\in \Gamma_2$ and $u_1\dots u_{|x|}$ be a geodesic word representing $x$.
Assume the suffix $u_s=u_{|x|-2}u_{|x|-1} u_{|x|}\notin \mathcal{R}$.
Then $\mathcal{C}(x)=\mathcal{C}(u_s).$

The same conclusion holds if $u_s=u_{|x|-1} u_{|x|}.$
\end{lem}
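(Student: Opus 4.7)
The plan is to apply Lemma \ref{lemmatutto} to the decomposition $x = w \cdot u_s$, with $w = u_1 \dots u_{|x|-3}$ (and $i = 3$) in the three-letter case, and $w = u_1 \dots u_{|x|-2}$ (and $i = 2$) in the two-letter case. Two hypotheses of that lemma must be checked: that $u_s \in \mathcal{C}(w)$, and that $w_J u_s \notin \mathcal{R}$ for every geodesic representation $w_1\dots w_J$ of $w$.

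The first hypothesis follows from the standard fact that every prefix of a geodesic word is itself geodesic---otherwise a shorter representative of the prefix could be concatenated with the tail $u_s$ to produce a shorter representative of $x$, contradicting that $u_1\dots u_{|x|}$ is geodesic. Hence $|w| = |x|-|u_s|$ and $d(e,wu_s) = |x| = |w|+|u_s|$, which is precisely the condition $u_s\in\mathcal{C}(w)$.

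The key point is the second hypothesis, and this is where the assumption $u_s\notin\mathcal{R}$ enters. The decisive observation is that $\mathcal{R}$ is closed under taking suffixes: if a word $v$ is a consecutive sub-string of some cyclic permutation $P$ of the relator or of its inverse, then any suffix of $v$ is again a consecutive sub-string of $P$, so it belongs to $\mathcal{R}$. Applying this to $v = w_J u_s$ (of length $4$ or $3$), we see that $w_J u_s\in\mathcal{R}$ would force the suffix $u_s$ to belong to $\mathcal{R}$, contradicting the hypothesis. Consequently, no geodesic representation of $w$ can have a last letter $w_J$ producing a bad configuration.

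With both hypotheses verified, Lemma \ref{lemmatutto} directly yields $\mathcal{C}(u_s) = \mathcal{C}(w\cdot u_s) = \mathcal{C}(x)$, which is the conclusion. The two-letter case goes through word-for-word with $i = 2$ in place of $i = 3$. I do not foresee any serious obstacle: the entire content is the elementary suffix-closure property of $\mathcal{R}$, which reduces the verification of the second hypothesis of Lemma \ref{lemmatutto} to the stated hypothesis on $u_s$.
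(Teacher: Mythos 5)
Your proof is correct and follows essentially the same route as the paper: decompose $x=u_p\,u_s$ with $u_p=u_1\dots u_{|x|-3}$ (resp.\ $u_1\dots u_{|x|-2}$), note that $u_s\in\mathcal{C}(u_p)$ since the whole word is geodesic, and rule out $y_{J}u_s\in\mathcal{R}$ for any geodesic representation $y_1\dots y_J$ of $u_p$ by the suffix-closure of $\mathcal{R}$, then invoke Lemma \ref{lemmatutto}. The only difference is cosmetic: you spell out the suffix-closure observation and the prefix-geodesic fact that the paper leaves implicit.
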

\begin{proof} The proof is an application of Lemma \ref{lemmatutto}.

Let $u_p=u_{1}\dots u_{|x|-3},$ and consider a geodesic word
$y_{1}\dots y_{|x|-3}=u_p$. Note that $u_s\in\mathcal{C}(u_p)$ since
$u_1\dots u_{|x|}$ is a geodesic word.

If $y_{|x|-3}u_s\in \mathcal{R}$, then also
$u_s\in \mathcal{R}$, which is a contradiction. Hence
$y_{|x|-3}u_s\notin \mathcal{R},$ and, by Lemma \ref{lemmatutto}, we get
$\mathcal{C}(x)=\mathcal{C}(u_p u_s)=\mathcal{C}(u_s).$
\end{proof}

As a consequence of  Lemmas \ref{lemmatutto}, \ref{lemma4}, and \ref{uni},  we have
\begin{prop}\label{prop1}
All the possible cone types of $\Gamma_2$ (${\mC}(e)$ excluded) are those determined by geodesic words obtained as sub-words,
with length at most $4$, of any cyclic permutation of the relator $[a,b][c,d]$
or of its inverse $[d,c][b,a],$ adding up to $48$.
\end{prop}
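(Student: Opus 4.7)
The plan is to combine the three reduction lemmas \ref{lemmatutto}, \ref{lemma4}, and \ref{suff} with Lemma \ref{uni} and an explicit count of the elements of $\mathcal{R}$.

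First I would show that every non-identity $x\in\Gamma_2$ satisfies $\mathcal{C}(x)=\mathcal{C}(r)$ for some $r\in\mathcal{R}$. The argument is by descent on $|x|$. Given a geodesic representative $u_1\dots u_n$ of $x$, look at its length-$4$ suffix (or the whole word when $n<4$). If this suffix lies in $\mathcal{R}$, Lemma \ref{lemma4} gives $\mathcal{C}(x)=\mathcal{C}(\text{suffix})$, of the required form. Otherwise examine the length-$3$ suffix $u_s$: when $u_s\notin\mathcal{R}$, Lemma \ref{suff} yields $\mathcal{C}(x)=\mathcal{C}(u_s)$ directly; when $u_s\in\mathcal{R}$, invoke Lemma \ref{lemmatutto} with $i=3$, whose hypothesis ``$w_J u_s\notin\mathcal{R}$ for every geodesic $w_1\dots w_J$ of $u_1\dots u_{n-3}$'' is forced by the assumption that the length-$4$ suffix is not in $\mathcal{R}$, together with the constraint on how geodesic representatives of the same element can differ in $\Gamma_2$. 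Analogous steps handle the length-$2$ and length-$1$ suffixes. Iterating terminates at some $r\in\mathcal{R}$ with $\mathcal{C}(x)=\mathcal{C}(r)$.

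Second, I would enumerate the elements of $\mathcal{R}$, organised by length. The cyclic permutations of $R=[a,b][c,d]$ and of $R^{-1}=[d,c][b,a]$ each contribute $8$ sub-words of each length $k\in\{1,2,3,4\}$, and one checks how many of the resulting $16$ words represent distinct elements of $\Gamma_2$. For $k=1$ both collections yield the $8$ generators, giving $8$ elements. For $k=2,3$ all $16$ words are pairwise distinct in $\Gamma_2$, because any identity between two such words would yield a non-trivial element of $\ker p$ of length $\le 6$, which is impossible since the surface-group Dehn algorithm forces every non-trivial kernel element to exhibit a sub-word strictly longer than half the length-$8$ relator; hence $16$ elements for each of these lengths. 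For $k=4$, the eight cyclic shifts of the half-relator identity $aba^{-1}b^{-1}=dcd^{-1}c^{-1}$ pair each length-$4$ sub-word of $R$ with exactly one length-$4$ sub-word of $R^{-1}$, collapsing $16$ words to $8$ distinct elements. Summing gives $|\mathcal{R}|=8+16+16+8=48$, and Lemma \ref{uni} guarantees that the corresponding $48$ cone types are pairwise distinct.

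The main obstacle I foresee is the descent step: one must carefully justify the use of Lemma \ref{lemmatutto} when a short suffix lies in $\mathcal{R}$, ruling out, for \emph{every} geodesic representative of the prefix $u_1\dots u_{n-i}$, that its last letter could combine with $u_s$ to form a word in $\mathcal{R}$. This reduces to a control statement on how two geodesics for the same element can differ in $\Gamma_2$, essentially via controlled ``half-relator'' substitutions. The enumeration in the second step is comparatively mechanical once one recognises that the identities $aba^{-1}b^{-1}=dcd^{-1}c^{-1}$ and its seven cyclic shifts are the only source of collapse among sub-words of length at most $4$.
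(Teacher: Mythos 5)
Your second step (the enumeration) is correct and essentially identical to the paper's count: $8+16+16+8=48$, with the sixteen length-$4$ words collapsing in pairs through $aba^{-1}b^{-1}=dcd^{-1}c^{-1}$ and its cyclic shifts; your girth-type justification of distinctness is sound, where the paper instead gets distinctness of the corresponding cone types from Lemma \ref{uni}. The genuine gap is in your descent step. You claim that when the length-$3$ suffix $u_s$ of your \emph{chosen} geodesic representative lies in $\mR$ while its length-$4$ suffix does not, the hypothesis of Lemma \ref{lemmatutto} (``$w_Ju_s\notin\mR$ for \emph{every} geodesic word $w_1\dots w_J$ representing the prefix'') is forced. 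It is not: your assumption constrains only the one representative you picked, whereas the hypothesis quantifies over all geodesic representatives of the prefix. The paper's own example after Lemma \ref{lemmatutto} refutes the claim: for $x=dcd^{-1}c^{-1}a^{-1}dc$ one has $u_s=a^{-1}dc\in\mR$ and $c^{-1}a^{-1}dc\notin\mR$, yet the prefix $dcd^{-1}c^{-1}$ equals $aba^{-1}b^{-1}$, whose last letter gives $b^{-1}a^{-1}dc\in\mR$; hence Lemma \ref{lemmatutto} does not apply, and by Lemma \ref{lemma4} the cone type of $x$ is the quadruple $\mC(b^{-1}a^{-1}dc)$, which by Lemma \ref{uni} is \emph{not} $\mC(a^{-1}dc)$. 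Run on the representative $dcd^{-1}c^{-1}a^{-1}dc$, your procedure outputs the wrong cone type, so the descent as written fails.

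The repair is to place the quantifier correctly: either \emph{some} geodesic representative of $x$ has its length-$4$ suffix in $\mR$, in which case Lemma \ref{lemma4} applied to that representative settles the type; or \emph{no} representative does, and only then is the hypothesis of Lemma \ref{lemmatutto} for $u_s$ automatic, because for any geodesic word $w_1\dots w_J$ representing the prefix, $w_1\dots w_J u_s$ is again a geodesic representative of $x$ whose length-$4$ suffix is $w_Ju_s$, so $w_Ju_s\in\mR$ would contradict the assumption (the length-$2$ stage needs the same care, while the case $u_s\notin\mR$ is already handled unconditionally by Lemma \ref{suff}). This ``maximize the suffix in $\mR$ over all representatives'' argument is exactly what the paper carries out in the proof of Lemma \ref{geo}, and it is precisely what your unproved ``control statement on half-relator substitutions'' would have to deliver; it cannot be assumed, since it is the very point where the example above bites. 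With that quantifier fixed, your argument coincides with the paper's proof, which derives the first sentence from Lemmas \ref{lemmatutto}, \ref{lemma4} and \ref{uni} and performs the same count.
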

\begin{proof}
The first sentence is a consequence of the previous Lemmas. The second one is established
by looking for words of length at most $4$  in any cyclic permutation of either $aba^{-1}b^{-1}cdc^{-1}d^{-1},$
or its inverse $dcd^{-1}c^{-1}bab^{-1}a^{-1},$ and adding up. Precisely we have:
\begin{itemize}
\item $8$ words of length $1$;
\item $2(7+1)$ words of length $2,$
($7$ in the  relator, $1$ in the following permutation, twice);
\item $2(6+1+1)$ words of length $3,$
($6$ in the  relator,  $1$ in the following $2$ permutations, twice);
\item $(5+1+1+1)$ words of length $4,$
($5$ in the relator, $1$ in the following $3$ permutation,
just once, since for the inverse you get the same words).
\end{itemize}

Adding up we get $6\times 8=48.$
\end{proof}
The above reasoning can be applied to a generic surface group
$\Gamma_g,$ leading to
\begin{thm}\label{main}
All the possible cone types of $\Gamma_g$ (${\mC}(e)$ excluded)
are those determined by geodesic words obtained as sub-words,
with length at most $2g$, of any cyclic permutation of either
$[a_1,b_1]\dots[a_g,b_g],$
or  its inverse $[b_g,a_g]\dots [b_1,a_1],$ adding up to $8g(2g-1)$.
\end{thm}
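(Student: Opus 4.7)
The plan is to replay the argument leading to Proposition~\ref{prop1}, replacing the genus-$2$ constants by their $g$-dependent analogues. In Lemmas~\ref{lemma1}--\ref{suff} the only essential uses of $g=2$ are the relator length $8=4g$ and the threshold $5=2g+1$ at which a consecutive run of letters drawn from a cyclic permutation of the relator (or its inverse) becomes non-geodesic. Substituting $4g$ and $2g+1$ throughout, and redefining $\mathcal{R}$ (now $\mathcal{R}_g$) as the set of elements of $\Gamma_g$ represented by a sub-word of length at most $2g$ of a cyclic permutation of $[a_1,b_1]\cdots[a_g,b_g]$ or of its inverse, the analogues of the five lemmas go through verbatim. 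The analogue of Lemma~\ref{suff} implies that the cone type of any $x\in\Gamma_g$ coincides with the cone type of the longest suffix (of length $\le 2g$) of a geodesic representative that lies in $\mathcal{R}_g$; the analogue of Lemma~\ref{uni} states that distinct elements of $\mathcal{R}_g$ have distinct cone types. Consequently, the non-trivial cone types are in bijection with the distinct group elements of $\mathcal{R}_g$.

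It remains to count these elements. Writing $w_g=a_1 b_1 a_1^{-1} b_1^{-1}\cdots a_g b_g a_g^{-1} b_g^{-1}$, each of the cyclic words $w_g$ and $w_g^{-1}$ contributes $4g$ reduced sub-words of each length $1\le k\le 2g$. The decisive fact is that the shortest non-trivial relation in $\Gamma_g$ has length $4g$, hence any two distinct reduced words of length at most $2g-1$ represent distinct group elements. For $k=1$ the two sets of $4g$ sub-words coincide (they are just the $4g$ generators), contributing $4g$ elements. For $2\le k\le 2g-1$, a direct inspection of the adjacent-letter patterns (pairs occurring in $w_g$ such as $a_i b_i$ are never equal, as reduced words, to pairs occurring in $w_g^{-1}$ such as $b_i a_i$ or $a_i b_i^{-1}$) shows the two sets are disjoint as reduced words, hence by the no-short-relations observation also as group elements, yielding $8g$ elements per $k$. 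For $k=2g$, each length-$2g$ sub-word $u$ of $w_g$ satisfies $uv=e$ with $v$ the complementary length-$2g$ sub-word of $w_g$, so $u=v^{-1}$ is a length-$2g$ sub-word of $w_g^{-1}$; hence the two length-$2g$ sub-word sets coincide as group elements, contributing $4g$ elements. Summing gives
\[
4g+(2g-2)\cdot 8g+4g=16g^2-8g=8g(2g-1),
\]
as claimed.

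The principal obstacle is the edge-case analysis at $k=1$ and $k=2g$, where the sub-words of $w_g$ and of $w_g^{-1}$ coincide (trivially in the first case, via the defining relation in the second), together with verifying the no-short-relations statement that underwrites disjointness in the middle range $2\le k\le 2g-1$; the latter is an instance of Dehn's algorithm for surface groups. Once these boundary coincidences and the middle-range injectivity are pinned down, the remainder of the proof is purely bookkeeping, and the generalisation of Lemma~\ref{uni}, although a case check, presents no conceptual difficulty since distinct elements of $\mathcal{R}_g$ always admit explicit distinguishing geodesic continuations dictated by the position of the sub-word within the cyclic relator.
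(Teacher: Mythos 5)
Your proposal is correct and takes essentially the same route as the paper: generalize the Section~\ref{how} lemmas by replacing $8$ and $5$ with $4g$ and $2g+1$, reduce cone types to the elements of the generalized $\mathcal{R}$, and count sub-words of length at most $2g$ of the cyclic relator and its inverse, with exactly the paper's boundary coincidences (length $1$ trivially, length $2g$ via the defining relation) and the same tally $4g+8g(2g-2)+4g=8g(2g-1)$, which the paper merely organizes per length as $(4g-(k-1))+(k-1)$ instead of $4g$ cyclic positions. Your only addition is to make explicit, via the no-short-relation property (Dehn's algorithm), that the counted words are pairwise distinct group elements and the two families are disjoint in the middle range---points the paper leaves implicit.
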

\begin{proof}
Looking for words of length at most $2g$ in
any cyclic permutation of either the relator
$[a_1,b_1]\!\dots[a_g,b_g]$
or  its inverse $[b_g,a_g]\!\dots \![b_1,a_1],$ we have:
\begin{itemize}
\item $4g$ words of length $1$;
\item $2((4g-1)+1)$ words of length $2,$
($4g-1$ in the  relator, $1$ in the following permutation, twice);
\item $2((4g-2)+1+1)$ words of length $3,$
($4g-2$ in the  relator,  $1$ in the following $2$ permutations, twice);
\item \dots \dots
\item $2((4g-(2g-2))+(2g-2))$ words of length $2g-1,$
($4g-(2g-2)$ in the  relator,  $1$ in the following $2g-1$ permutations, twice);
\item $((4g-(2g-1))+(2g-1))$ words of length $2g,$
($4g-(2g-1)$ in the  relator, $1$ in the following $2g-1$ permutation,
just once, since for the inverse you get the same words).
\end{itemize}

Adding up we get
\begin{eqnarray*}
& &4g(1+\underbrace{2+2+\dots +2}+1)=4g(2+2(2g-2))\\
& & \hspace{2cm}2g-2 \\
& & = 8g(1+2g-2)= 8g(2g-1)
\end{eqnarray*}
\end{proof}

\section{Cone type of successors}\label{successors}

We say, in short, that a cone type of an element of $\Gamma_2$ is a quadruple,
triple, double, single, if
it is one of the $48$ cone types defined by elements in $\mR$, of length, respectively
 $4, 3, 2, 1.$

It is convenient to  organize  the cone types in  singles, doubles, triples,
and quadruples. We follow the order shown in Table \ref{tab}.

\begin{table}[ht]
\caption{Cone types in $\Gamma_2$.}
\begin{tabular}{|l|l|l|l|l|l|l|l|} \hline
    & singles &  & doubles   &      & triples    &      & quadruples\\ \hline
$1$	& $b^{-1}$	& $9$  & $b^{-1}c$      & $25$ & $b^{-1}cd$ & $41$ & $b^{-1}cdc^{-1}$\\ \hline
$2$	& $a$	  & $10$ & $b^{-1}a^{-1}$ & $26$ & $b^{-1}a^{-1}d$ & $42$ &	$aba^{-1}b^{-1}$\\ \hline
$3$ & $d$	 & $11$ &	$ab$     & $27$ & $aba^{-1}$	& $43$ & $dc^{-1}d^{-1}a$\\ \hline 	
$4$ & $c^{-1}$ & $12$ &	$a b^{-1}$ &	$28$ & $ab^{-1}a^{-1}$ & $44$ & $c^{-1}d^{-1}ab$ \\ \hline
$5$ & $d^{-1}$ & $13$ &	$dc^{-1}$      &	$29$ & $dc^{-1}d^{-1}$ & $45$ & $d^{-1}aba^{-1}$\\ \hline
$6$ & $c$ & $14$ &	$dc$ &	$30$ & $dcd^{-1}$ & $46$ &	$cdc^{-1}d^{-1}$ \\ \hline
$7$ & $b$ & $15$ &	$c^{-1}d^{-1}$      &	$31$ & $c^{-1}d^{-1}a$	& $47$ & $b a^{-1}b^{-1}c$	\\ \hline
$8$ & $a^{-1}$ & $16$ &	$c^{-1}b$ &	$32$ & $c^{-1}ba$ & $48$ &$a^{-1}b^{-1}cd$ \\ \hline
	& 		         & $17$ &	$d^{-1}a$    &	$33$ & $d^{-1}ab$  & &\\ \hline
  &	             & $18$ &	$d^{-1}c^{-1}$&	$34$  &	$d^{-1}c^{-1}b$ & &\\ \hline
	&              & $19$	&$cd$& $35$	&  $cdc^{-1}$ & &\\ \hline
  &              & $20$	&	$cd^{-1} $    &	$36$ & $cd^{-1}c^{-1}$  & &\\ \hline
	&              & $21$	&$b a^{-1}$     &	$37$	& $b a^{-1}b^{-1}$& &	\\ \hline
  &	             & $22$	&	$ba$&	$38$  &	$bab^{-1}$& &\\ \hline
	&              & $23$	&$a^{-1}b^{-1}$     &	$39$	&  $a^{-1}b^{-1}c$& &\\ \hline
  &              & $24$ &	$a^{-1}d$  &	$40$ & $a^{-1}dc$& &\\ \hline
%
%
\end{tabular}
\label{tab}
\end{table}

We now provide cone types for each successors of the $48$ elements in
Table \ref{tab}. As it will be shown in Proposition \ref{ugua}, the list of cone
types of successors depends only on the cone type.
\begin{itemize}
\item We start with the generator $a$.
Successors of $a$ are
$$aa,\; ad,\; ac^{-1},\; ad^{-1},\;a c,\;a b,\; a b^{-1},$$
and we need to find the cone type of the first $5$ only, since they are not in $\mR$.

We note that for each $u=a,d,c^{-1},d^{-1},c$,
the geodesic word $au\notin\mR.$
 Hence by Lemma \ref{lemmatutto}
we get that  ${\mC} (au)= {\mC} (u).$
 The same argument works for any of
$$b,\; c,\; d,\; a^{-1},\; b^{-1},\;c^{-1},\; d^{-1},$$
and we observe that successors of {\it singles} are either singles or doubles.
\item
We consider $ab$.
Successors of $ab$ are
$$aba,\; abd,\; abc^{-1},\; abd^{-1},\;a bc,\;a bb,\; a ba^{-1},$$
and we have to find the cone type of the first $6$ only, since the last one
determines a cone type by itself.

Note that for each $u=d,c^{-1},d^{-1},c,b$,
both the geodesic words $abu,$ and $bu,$ are not in $\mR$.
 Hence by Lemma \ref{lemmatutto} applied twice,
we get that  ${\mC} (abu)= {\mC} (bu)= {\mC} (u).$

On the other hand, $aba\notin\mR,$  while $ba\in\mR$.
So we conclude that
${\mC} (aba)= {\mC} (ba).$
The same argument works for any double listed in Table \ref{tab},
and we observe that successors of {\it doubles} can be singles,  doubles
or triples.

\item
Consider now $aba^{-1}$.
Successors of $aba^{-1}$ are
$$ aba^{-1}d,\; aba^{-1}c^{-1},\; aba^{-1}d^{-1},\;a ba^{-1}c,
\;a ba^{-1}b,\; a ba^{-1}a^{-1}, \; aba^{-1}b^{-1}$$
and we have to find the cone type of the first $6$ only.

For each $u=c^{-1},d^{-1},c,b,a^{-1}$,
both  geodesic words $ba^{-1}u,$ and $a^{-1}u,$ are not in $\mR$. Hence by Lemma \ref{lemmatutto}
we get that
$${\mC} (aba^{-1}u)= {\mC} (ba^{-1}u)={\mC} (a^{-1}u)= {\mC} (u).$$

On the other hand, $ba^{-1}d\notin\mR,$
while $a^{-1}d\in\mR$. So we conclude that
$${\mC} (aba^{-1}d)= {\mC} (ba^{-1}d)={\mC} (a^{-1}d).$$
The same argument  works for any of the triples listed in Table \ref{tab},
and we observe that successors of {\it triples} are singles, doubles or
quadruples.

\item
Finally let us consider $aba^{-1}b^{-1}=dcd^{-1}c^{-1}$.
Successors of $aba^{-1}b^{-1}$ are
$$\begin{array}{lll}
  aba^{-1}b^{-1}a,\;&
aba^{-1}b^{-1}d,\;&
aba^{-1}b^{-1}c^{-1},\\
aba^{-1}b^{-1}d^{-1},\; &
\; a ba^{-1}b^{-1}a^{-1},&
\; aba^{-1}b^{-1}b^{-1}.
\end{array}$$

Note that for $u=d,c^{-1},a, b^{-1}$,
all geodesic words $ba^{-1}b^{-1}u,$ $a^{-1}b^{-1}u,$ and $b^{-1}u$
 are not in $\mR$. Hence by Lemma \ref{lemmatutto},
we get that  ${\mC} (aba^{-1}b^{-1}u)= {\mC} (u).$

On the other hand, $ba^{-1}b^{-1}a^{-1},$ and $a^{-1}b^{-1}a^{-1}$ are not
in $\mR$, while $b^{-1}a^{-1}$ is. So we conclude that
$${\mC} (ba^{-1}b^{-1}a^{-1})= {\mC} (a^{-1}b^{-1}a^{-1})={\mC} (b^{-1}a^{-1}).$$

Similarly $ba^{-1}b^{-1}d^{-1}=cd^{-1}c^{-1}d^{-1},$ and $d^{-1}c^{-1}d^{-1}$ are not in $\mR$, while $c^{-1}d^{-1}$ is. So we conclude that
$${\mC} (ba^{-1}b^{-1}d^{-1})= {\mC} (d^{-1}c^{-1}d^{-1})={\mC} (c^{-1}d^{-1}).$$
The same argument above works for every quadruple listed in Table \ref{tab},
and we observe that successors of {\it quadruple} are either singles or doubles.

\end{itemize}

\begin{lem}\label{geo}
If the cone type of $x\in \Gamma_2$ is ${\mathcal C}(z)$ where $z\in\mR$, then there exists a geodesic word
$u_1 u_2\dots u_{|x|-|z|+1}\dots u_{|x|}$ which represents $x$, ending with $z$ (i.e. $u_{|x|-|z|+1}\dots u_{|x|}=z$), and
such that $u_{|x|-|z|}z\notin \mathcal{R}.$
\end{lem}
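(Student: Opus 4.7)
The plan is to pick, among the finitely many geodesic representations of $x$, one whose $\mathcal{R}$-suffix is as long as possible, and then to check that this suffix already coincides with $z$ as a group element and automatically satisfies the boundary condition. Concretely, I would set
\[
s=\max\{r:\, 1\le r\le\min(|x|,4),\ \exists\text{ geodesic }u_1\dots u_{|x|}=x\text{ with }u_{|x|-r+1}\dots u_{|x|}\in\mathcal{R}\},
\]
fix a geodesic $u_1\dots u_{|x|}$ attaining this $s$, and write $z'=u_{|x|-s+1}\dots u_{|x|}$. Since $z'$ is a suffix of a geodesic it is itself a geodesic word, so $|z'|=s$.

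The key step is to establish $\mathcal{C}(x)=\mathcal{C}(z')$, because then Lemma \ref{uni} forces $z'=z$ as group elements and $s=|z|$. When $s=4$ this is immediate from Lemma \ref{lemma4}, applied with $y=u_1\dots u_{|x|-4}$ and $u_1u_2u_3u_4=z'$. When $s\le 3$ I would invoke Lemma \ref{lemmatutto} with $y=u_1\dots u_{|x|-s}$ and $u_1\dots u_s=z'$; its delicate hypothesis requires $w_J z'\notin\mathcal{R}$ for every geodesic word $w_1\dots w_J$ representing $y$. This is exactly where the maximal choice of $s$ pays off: if some $w_J z'$ were in $\mathcal{R}$, then $w_1\dots w_J z'$ would be a geodesic of $x$ exhibiting an $\mathcal{R}$-suffix of length at least $s+1$, contradicting the maximality of $s$.

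The boundary condition $u_{|x|-|z|} z'\notin\mathcal{R}$ then splits into two cases. For $s=4$ the word $u_{|x|-4} z'$ is a length-$5$ suffix of the geodesic $u_1\dots u_{|x|}$, hence itself a geodesic word of length $5$; since elements of $\mathcal{R}$ have length at most $4$, this word cannot lie in $\mathcal{R}$. For $s\le 3$ the maximality argument above, specialized to the particular geodesic $u_1\dots u_{|x|-s}$ of $y$, directly yields $u_{|x|-s} z'\notin\mathcal{R}$.

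The main obstacle I anticipate is exactly the universal-quantifier hypothesis of Lemma \ref{lemmatutto}: an arbitrary geodesic of $x$ may have prefixes $y$ with alternative geodesic representations whose final letters form an $\mathcal{R}$-element together with $z'$, in which case the lemma would simply not apply. Starting instead from a geodesic that realizes the longest possible $\mathcal{R}$-ending turns this difficulty into a one-line contradiction with maximality, and simultaneously produces the geodesic demanded by the statement.
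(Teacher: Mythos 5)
Your proof is correct and follows essentially the same route as the paper: both arguments select a geodesic representative of $x$ whose $\mathcal{R}$-suffix has maximal length, apply Lemma \ref{lemma4} in the length-$4$ case and Lemma \ref{lemmatutto} otherwise (with maximality supplying its universally quantified hypothesis, since any violating prefix representation would yield a strictly longer $\mathcal{R}$-suffix), and invoke the uniqueness Lemma \ref{uni} to identify the suffix with $z$. The only differences are organizational: you argue directly where the paper assumes no geodesic ends with $z$ and derives a contradiction, and you obtain the boundary condition $u_{|x|-|z|}z\notin\mathcal{R}$ from maximality for your chosen geodesic, whereas the paper derives it for any geodesic ending in $z$ via Lemma \ref{lemma1} and Lemma \ref{uni} --- both adequate, since the statement only asserts existence.
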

\begin{proof}
We first show that there exists a geodesic word
$u_1\dots u_{|x|}$ which represents $x$ and ends with $z$.

For any  geodesic word $x_1\dots x_J$ which represents $x$ let us denote
by $s_{x,k}=x_{J-k+1}\dots x_J,$ $1\leq k\leq 4$ the longest
suffix  which belongs to $\mathcal{R}$.

Let $n=|z|,$  $1\leq n\leq 4,$ and, say, $z=z_1\dots z_n.$

Now assume, on the contrary,  that any geodesic word which represents $x$ does not end with $z$.
Then for any geodesic word $x_1\dots x_J$ such that
$x=x_1\dots x_J$ we have $x_{J-n+1}\dots x_J\neq z_1\dots z_n.$

If for some representation we have $k=4,$ then the cone type of $x$ is
${\mathcal C}(s_{x,4})$ by Lemma \ref{lemma4}, and by uniqueness of cone type we must have
$z=s_{x,4}$ which is a contradiction.

Hence for all other representations of $x$ we have $k<4$. Let's take one with the
longest suffix $s_{x,k}\in{\mathcal C}(x_1\dots x_{J-k})$
(it exists since otherwise we find a shorter geodesic representing $x$).

If for any geodesic word $y_1\dots y_{J-k}$ which represents $x_1\dots x_{J-k}$ we have that $y_{J-k}s_{x,k}\notin\mathcal{R}$ , then by  Lemma \ref{lemmatutto},
$${\mathcal C}(z)={\mathcal C}(x)={\mathcal C}(x_1\dots x_{J-k}s_{x,k})={\mathcal C}(s_{x,k}),$$
and again, by uniqueness, $z=s_{x,k}$, a contradiction.

So there exists a
geodesic word $y_1\dots y_{J-k}$ representing $x_1\dots x_{J-k}$ and $y_{J-k}s_{x,k}\in\mathcal{R}$.
Thus we have found a representation of $x=y_1\dots y_{J-k}s_{x,k},$
where the suffix $y_{J-k}s_{x,k}\in\mathcal{R}$ is longer then $s_{x,k}$,
yielding again a contradiction. This complete the first part of the proof.

Next, given a  geodesic word
$u_1 u_2\dots u_{|x|-|z|}z$ which represents $x$  we can exclude
$u_{|x|-|z|}z\in \mathcal{R}$ if $z$ is a quadruple, since the word is geodesic. In all other cases
we have, by Lemma \ref{lemma1},
$$\mathcal{C}(z)=\mathcal{C}(x)\subset\mathcal{C}(u_{|x|-|z|}z)\subset
\mathcal{C}(z),$$
and so, by Lemma \ref{uni}, we get  $u_{|x|-|z|}z\notin \mathcal{R}$.

\end{proof}

\begin{prop}\label{ugua}
If $x,z\in \Gamma_2,$ $z\in \mR,$ and $\mathcal{C}(x)=\mathcal{C}(z),$ then
the cone type of any of the successors of $x$ is the same as the cone type of
the successor of $z$ corresponding to the same generator.
\end{prop}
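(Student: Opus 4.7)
My plan is to bypass the explicit case analysis of Section \ref{successors} and to derive Proposition \ref{ugua} straight from Definition \ref{bridson}. The heart of the argument is the following general formula, which I would establish first: for every $y \in \Gamma_2$ and every generator $u \in \mathcal{C}(y)$,
\begin{equation*}
\mathcal{C}(yu) \;=\; \{\,v \in F(A) \colon v \text{ does not begin with } u^{-1} \text{ and } uv \in \mathcal{C}(y)\,\}.
\end{equation*}
Granting this, Proposition \ref{ugua} follows in a single line: under the hypothesis $\mathcal{C}(x) = \mathcal{C}(z)$, the right-hand side is the same whether $y=x$ or $y=z$, so $\mathcal{C}(xu) = \mathcal{C}(zu)$ for every generator $u$ lying in the common cone type (equivalently, every generator producing a successor of both $x$ and $z$).

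To prove the formula, I would start with $v \in \mathcal{C}(yu)$ and unfold the defining condition $d(e, yuv) = |y| + 1 + |v|$. The first sub-step is to observe that $v$ cannot begin with $u^{-1}$: otherwise $yuv$ simplifies to $yv'$ with $|v'| = |v| - 1$, and the trivial bound $d(e,yv') \leq |y| + |v'|$ contradicts the assumed value. Once this is ruled out, $uv$ is a reduced word of length $|v|+1$, and the equality $d(e, y \cdot uv) = |y| + |uv|$ is precisely the condition $uv \in \mathcal{C}(y)$. The converse inclusion reads the same chain of equalities in reverse.

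No real obstacle is expected: the entire argument is a brief manipulation of the definition, with the only delicate step being the length inequality that excludes words starting with $u^{-1}$. It is worth observing that Lemma \ref{geo} is not actually needed for this proof; it seems to be introduced rather as a tool for the explicit tabulation of successor cone types carried out in Section \ref{successors} and for the subsequent construction of the matrix of cone types, not as a prerequisite for Proposition \ref{ugua} itself.
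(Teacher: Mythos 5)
Your proof is correct, and it takes a genuinely different and substantially more elementary route than the paper. Your key identity, $\mathcal{C}(yu)=\{v\in F(A)\colon v\ \text{does not begin with}\ u^{-1}\ \text{and}\ uv\in\mathcal{C}(y)\}$, is verified exactly by the length computations you sketch (the exclusion of $v=u^{-1}v'$ via $d(e,yv')\leq |y|+|v|-1$ against the assumed $|y|+1+|v|$, and the observation that $uv\in\mathcal{C}(y)$ automatically forces $uv$ to be geodesic), and it shows that the cone type of a successor is a function of the pair (cone type of the parent, generator). Note that your argument uses no hyperbolicity, no relator structure, and not even the hypothesis $z\in\mathcal{R}$: it proves the statement for an arbitrary finitely generated group with any generating set, which is in fact the classical observation underlying the finite-state automaton structure of cone types (cf.\ \cite{BH}). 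The paper instead proves Proposition \ref{ugua} by a $\Gamma_2$-specific argument: it first establishes Lemma \ref{geo} (every $x$ with $\mathcal{C}(x)=\mathcal{C}(z)$ admits a geodesic spelling ending in $z$ whose preceding letter does not extend $z$ inside $\mathcal{R}$) and then runs a case analysis on $za$ --- whether it lies in $\mathcal{R}$ and what its length is --- invoking Lemmas \ref{lemmatutto}, \ref{lemma4} and \ref{suff} together with the successor classification of Section \ref{successors}. What that longer route buys is computational content: it identifies, for each of the $48$ types and each admissible generator $a$, the concrete element of $\mathcal{R}$ whose cone type equals $\mathcal{C}(xa)$, which is what is actually needed to fill in the entries of the matrix $M$ in Section \ref{matrix}; your identity guarantees that the successor map on cone types is well defined (hence that $M$ makes sense) but does not by itself compute its values. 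One small correction to your closing remark: in the paper Lemma \ref{geo} is used precisely in the proof of Proposition \ref{ugua}, not in the Section \ref{successors} tabulation (which precedes it and rests on Lemmas \ref{lemmatutto} and \ref{lemma4}); on your route it is indeed dispensable for the proposition, though the tabulation machinery remains necessary for the matrix itself.
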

\begin{proof}
By Lemma \ref{geo} there exists a geodesic word which represents $x$, say
$u_1 u_2\dots u_{|x|-|z|+1}\dots u_{|x|},$  ending with $z$ (i.e. $u_{|x|-|z|+1}\dots u_{|x|}=z$), and
such that $u_{|x|-|z|}z\notin \mathcal{R}.$

Let us consider a successor $y$ of $x$. We have $y=xa$, with
$a\in A\cap \mathcal{C}(x)$ and
$d(e,xa)=d(e,x)+1$, yielding $a\in\mathcal{C}(z),$ and,
by definition of cone type, $d(e,za)=d(e,z)+d(e,a)=d(e,z)+1.$
Hence $za$ is a successor of $z$.

Also $a\in\mathcal{C}(x)$ implies that $u_1 u_2\dots u_{|x|-|z|}z a$ is a geodesic
word representing $xa$.

If $za\in\mathcal{R}$ is a quadruple, by Lemma \ref{lemma4} this yields
$\mathcal{C}(xa)=\mathcal{C}(za).$

If $za\in\mathcal{R}$ and  $|za|\leq 3,$ let $y=u_1 u_2\dots u_{|x|-|z|}$
and consider a geodesic word $v_1 v_2\dots v_{|x|-|z|}=y.$

Note that $za\in \mathcal{C}(y)$ so that $v_1 v_2\dots v_{|x|-|z|}z a$ is geodesic.

If $v_{|x|-|z|} z a\in\mathcal{R}$ then, also, $v_{|x|-|z|} z \in\mathcal{R}$
and, by Lemmas \ref{lemmatutto}, and \ref{lemma1},
$$\mathcal{C}(v_{|x|-|z|} z)\subset\mathcal{C}(z)=\mathcal{C}(x)\subset
\mathcal{C}(v_{|x|-|z|}z).$$

It follows $\mathcal{C}(v_{|x|-|z|} z)=\mathcal{C}(z),$ and by uniqueness of cone
types we have $v_{|x|-|z|} z=z$ and so the contradiction $v_{|x|-|z|}=e.$

Hence $v_{|x|-|z|} z a\notin\mathcal{R},$ and by Lemma \ref{lemmatutto}

$$\mathcal{C}(xa)=\mathcal{C}(yza)=\mathcal{C}(za).$$

So it remain to consider the case $za\notin\mathcal{R}.$

If $za\notin\mathcal{R}$ and $|za|\leq 3,$ we apply Lemma \ref{suff} to obtain
$\mathcal{C}(xa)=\mathcal{C}(za).$

If $za\notin\mathcal{R}$ and $|za|=4,$ then $|z|=3,$ and the cone type of
$z=z_1 z_2 z_3,$ is a triple.
Also, by Lemma \ref{lemmatutto} applied to
$y=z_1$,
$\mathcal{C}(z a)\mathcal{C}(z_2 z_3 a).$

Since the cone type of a successor of a triple can only be a
single, double, or quadruple,  it follows $z_2 z_3 a\not in \mathcal{R}.$

Therefore we have by Lemma \ref{suff},
$$\mathcal{C}(x a)=\mathcal{C}(z_2 z_3 a)=\mathcal{C}(z a).$$

The same reasoning apply for $za\notin\mathcal{R}$ and $|za|=5,$
recalling that quadruples  do not have triples and quadruples as successors.

If $za\notin\mathcal{R}$ and $|za|=5,$ then $|z|=4,$ and the cone type of
$z=z_1 z_2 z_3 z_4,$ is a quadruple.
Since the cone type of a successor of a quadruple can only be  either a
single or a  double,  it follows $z_2 z_3 z_4 a\not in \mathcal{R},$
(otherwise, by Lemma \ref{lemma4},
 $\mathcal{C}(z a)\mathcal{C}(z_1 z_3 z_4 a)$).

Hence, by Lemma \ref{lemmatutto} applied to
$y=z_1 z_2$,
$\mathcal{C}(z a)\mathcal{C}(z_3 z_4 a).$

Since the cone type of a successor of a quadruple can only be  either a
single or a  double,  it follows $z_3 z_4 a\not in \mathcal{R}.$

Therefore we have by Lemma \ref{suff},
$$\mathcal{C}(x a)=\mathcal{C}(z_3 z_4 a)=\mathcal{C}(z a),$$
and the proof is complete.
\end{proof}

\section{Matrix of cone types}\label{matrix}

Based on the discussion preceding Lemma \ref{geo}, and Proposition \ref{ugua} we can
now construct a matrix, indexed by cone types, in which any column gives the cone
type of any successor of the element whose cone type  indexes the column.
We work in the surface group of genus two, so we are speaking about a
 $48\times 48$ matrix. We think  it is better to provide the matrix as a block matrix.
 As we shall see, the matrix is sparse. It should be mentioned that any order of cone types
indexing its columns (and corresponding rows) gives a similar matrix, hence we follow
the order provided in Table \ref{tab}.

Next we list the $16$ blocks, $M_{i,j},$ $i,j=1,\dots,4,$ of the main matrix $M$.
Note that indexes $i,j$ refer to single, doubles, etc...
for example, indexes $M_{1,4}$ means  that rows are indexed by  singles, and
columns by quadruples. We set $0$ whenever $M_{i,j}$ is  the zero matrix, and $I$ for
the identity matrix.

\begin{equation}\label{emme}
M=\left(
\begin{array}{cccc}
M_{1,1}&M_{1,2}&M_{1,3}&M_{1,4}\\
M_{2,1}&M_{2,2}&M_{2,3}&M_{2,4}\\
0      &I       &0      &0\\
0      &0      &M_{4,3}&0
\end{array}
\right).
\end{equation}

The first block is an $8\times 8$ matrix

$$M_{1,1}=\left(
\begin{array}{cccccccc}
1 & 0& 1& 1& 1& 1& 0& 0\\
 1& 1 &1 &1 &0 &1 &0& 0\\
  1& 1& 1 &1 &0 &0 &1& 0\\
   1& 1& 0 &1 &0 &0 &1 &1\\
    1& 1& 0 &0 &1 &0 &1 &1\\
     0& 1& 0& 0& 1& 1& 1& 1 \\
    0 &0 &1 &0 &1 &1 &1 &1\\
     0& 0& 1& 1& 1& 1& 0& 1
    \end{array}\right).$$

In the same column it follows a $16\times 8$ matrix
$$M_{2,1}=\left(
\begin{array}{cccccccc}
1 &0& 0& 0& 0& 0& 0& 0\\
1& 0& 0& 0 &0 &0 &0 &0\\
 0 &1& 0& 0 &0& 0& 0& 0\\
 0 &1& 0& 0& 0& 0& 0& 0\\
 0 &0& 1& 0 &0& 0& 0& 0\\
 0 &0& 1& 0& 0& 0& 0& 0\\
 0 &0& 0& 1 &0& 0& 0& 0\\
 0 &0& 0& 1& 0& 0& 0& 0\\
 0 &0& 0& 0 &1& 0& 0& 0\\
 0 &0& 0& 0& 1& 0& 0& 0\\
 0 &0& 0& 0 &0& 1& 0& 0\\
 0 &0& 0& 0& 0& 1& 0& 0\\
 0 &0& 0& 0 &0& 0& 1& 0\\
 0 &0& 0& 0& 0& 0& 1& 0\\
 0 &0& 0& 0 &0& 0& 0& 1\\
 0 &0& 0& 0& 0& 0& 0& 1\\
\end{array}\right),$$

In the second column, first row, we have

$$M_{1,2}=
\left(
\begin{array}{cccccccccccccccc}
1&0&0&1&1&1&1&0&0&1&1&1&0&0&1&1\\
1&0&0&1&1&1&0&0&1&1&1&0&0&1&1&1\\
0&0&1&1&1&0&0&1&1&1&1&0&0&1&1&1\\
0&1&1&1&1&0&0&1&1&1&0&0&1&1&1&0\\
    0	&1	&1&	1	&0&	0	&1&	1	&1&	0	&0&	1	&1&	1	&1&	0\\
    1	&1	&1	&0	&0	&1	&1	&1	&1	&0	&0	&1	&1	&1	&0&	0\\
    1	&1	&1	&0	&0	&1	&1	&1	&0	&0	&1	&1	&1	&0	&0&	1\\
    1	&1  &0	&0  &1	&1  &1	&0  &0	&1&	1	&1&	1	&0  &0	&1
\end{array}\right),$$

and then, in the same column,
$$M_{2,2}=\left(
\begin{array}{cccccccccccccccc}
0&0&0&1&0&0&0&0&0&0&0&0&0&0&0&0\\
0&0&0&0&0&0&0&0&0&0&0&0&0&0&1&0\\
0&0&0&0&0&0&0&0&0&0&0&0&0&1&0&0\\
0&0&0&0&0&0&0&0&1&0&0&0&0&0&0&0\\
0&0&0&0&0&0&0&0&0&0&0&0&0&0&0&1\\
0&0&0&0&0&0&0&0&0&0&1&0&0&0&0&0\\
0&0&0&0&0&0&0&0&0&1&0&0&0&0&0&0\\
0&0&0&0&1&0&0&0&0&0&0&0&0&0&0&0\\
0&0&0&0&0&0&0&0&0&0&0&1&0&0&0&0\\
0&0&0&0&0&0&1&0&0&0&0&0&0&0&0&0\\
0&0&0&0&0&1&0&0&0&0&0&0&0&0&0&0\\
1&0&0&0&0&0&0&0&0&0&0&0&0&0&0&0\\
0&0&0&0&0&0&0&1&0&0&0&0&0&0&0&1\\
0&0&1&0&0&0&0&0&0&0&0&0&0&0&0&0\\
0&1&0&0&0&0&0&0&0&0&0&0&0&0&0&0\\
0&0&0&0&0&0&0&0&0&0&0&0&0&1&0&0
\end{array}\right).$$
In the third column, we have
$$M_{1,3}=\left(
\begin{array}{cccccccccccccccc}
1&1&0&0&1&1&0&0&0&0&1&1&1&1&1&1\\
1&1&0&0&0&0&1&1&0&0&1&1&1&1&1&1\\
1&1&0&0&0&0&1&1&1&1&1&1&1&1&0&0\\
0&0&1&1&0&0&1&1&1&1&1&1&1&1&0&0\\
0&0&1&1&1&1&1&1&1&1&0&0&1&1&0&0\\
0&0&1&1&1&1&1&1&1&1&0&0&0&0&1&1\\
1&1&1&1&1&1&0&0&1&1&0&0&0&0&1&1\\
1&1&1&1&1&1&0&0&0&0&1&1&0&0&1&1
\end{array}\right).$$

$$M_{2,3}=\left(
\begin{array}{cccccccccccccccc}
0&	0&	0&	0&	0&	0&	0&	0&	0&	0&	0&	0&	0&	1&	0&	0\\
0&	0&	0&	0&	0&	0&	0&	0&	0&	0&	0&	0&	1&	0&	0&	0\\
0&	0&	0&	0&	0&	0&	0&	1&	0&	0&	0&	0&	0&	0&	0&	0\\
0&	0&	0&	0&	0&	0&	1&	0&	0&	0&	0&	0&	0&	0&	0&	0\\
0&	1&	0&	0&	0&	0&	0&	0&	0&	0&	0&	0&	0&	0&	0&	0\\
1&	0&	0&	0&	0&	0&	0&	0&	0&	0&	0&	0&	0&	0&	0&	0\\
0&	0&	0&	0&	0&	0&	0&	0&	0&	0&	0&	1&	0&	0&	0&	0\\
0&	0&	0&	0&	0&	0&	0&	0&	0&	0&	1&	0&	0&	0&	0&	0\\
0&	0&	0&	0&	0&	1&	0&	0&	0&	0&	0&	0&	0&	0&	0&	0\\
0&	0&	0&	0&	1&	0&	0&	0&	0&	0&	0&	0&	0&	0&	0&	0\\
0&	0&	0&	0&	0&	0&	0&	0&	0&	0&	0&	0&	0&	0&	0&	1\\
0&	0&	0&	0&	0&	0&	0&	0&	0&	0&	0&	0&	0&	0&	1&	0\\
0&	0&	0&	0&	0&	0&	0&	0&	0&	1&	0&	0&	0&	0&	0&	0\\
0&	0&	0&	0&	0&	0&	0&	0&	1&	0&	0&	0&	0&	0&	0&	0\\
0&	0&	0&	1&	0&	0&	0&	0&	0&	0&	0&	0&	0&	0&	0&	0\\
0&	0&	1&	0&	0&	0&	0&	0&	0&	0&	0&	0&	0&	0&	0&	0
\end{array}\right).$$

$$M_{4,3}=\left(
\begin{array}{cccccccccccccccc}
1&	0&	0&	1&	0&	0&	0&	0&	0&	0&	0&	0&	0&	0&	0&	0\\
0&	0&	1&	0&	0&	1&	0&	0&	0&	0&	0&	0&	0&	0&	0&	0\\
0&	0&	0&	0&	1&	0&	0&	1&	0&	0&	0&	0&	0&	0&	0&	0\\
0&	0&	0&	0&	0&	0&	1&	0&	0&	1&	0&	0&	0&	0&	0&	0\\
0&	0&	0&	0&	0&	0&	0&	0&	1&	0&	0&	1&	0&	0&	0&	0\\
0&	0&	0&	0&	0&	0&	0&	0&	0&	0&	1&	0&	0&	1&	0&	0\\
0&	0&	0&	0&	0&	0&	0&	0&	0&	0&	0&	0&	1&	0&	0&	1\\
0&	1&	0&	0&	0&	0&	0&	0&	0&	0&	0&	0&	0&	0&	1&	0
\end{array}\right).$$

Finally in the last column we have
$$M_{1,4}=\left(
\begin{array}{cccccccc}
1&	1&	0&	0&	0&	0&	1&	1\\
1&	1&	1&	0&	0&	0&	0&	1\\
1&	1&	1&	1&	0&	0&	0&	0\\
0&	1&	1&	1&	1&	0&	0&	0\\
0&	0&	1&	1&	1&	1&	0&	0\\
0&	0&	0&	1&	1&	1&	1&	0\\
0&	0&	0&	0&	1&	1&	1&	1\\
1&	0&	0&	0&	0&	1&	1&	1
\end{array}\right).$$

$$M_{2,4}=\left(
\begin{array}{cccccccc}
0&	0&	1&	0&	0&	0&	0&	0\\
0&	1&	0&	0&	0&	0&	0&	0\\
0&	0&	0&	1&	0&	0&	0&	0\\
0&	0&	1&	0&	0&	0&	0&	0\\
1&	0&	0&	0&	0&	0&	0&	0\\
0&	0&	0&	0&	0&	0&	0&	1\\
0&	1&	0&	0&	0&	0&	0&	0\\
1&	0&	0&	0&	0&	0&	0&	0\\
0&	0&	0&	0&	0&	0&	1&	0\\
0&	0&	0&	0&	0&	1&	0&	0\\
0&	0&	0&	0&	0&	0&	0&	1\\
0&	0&	0&	0&	0&	0&	1&	0\\
0&	0&	0&	0&	1&	0&	0&	0\\
0&	0&	0&	1&	0&	0&	0&	0\\
0&	0&	0&	0&	0&	1&	0&	0\\
0&	0&	0&	0&	1&	0&	0&	0
\end{array}\right).$$

\begin{defn}\label{pri}
A square non-negative matrix $T$ is said to be primitive if there
exists a positive integer $k$ such that $T^k > 0$.
\end{defn}

\begin{prop}
The matrix $M$ is a primitive matrix.
\end{prop}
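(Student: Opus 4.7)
The plan is to verify the standard equivalent characterization: a non-negative square matrix is primitive if and only if its associated directed graph (vertex set the cone types, with an edge $j\to i$ iff $M_{ij}>0$) is both strongly connected and aperiodic. Aperiodicity is immediate from the observation that the diagonal of $M_{1,1}$ is identically $1$; every single cone-type carries a self-loop, so the gcd of directed cycle lengths is $1$. The real work lies in strong connectivity.

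To establish strong connectivity I would exploit the layered block shape of \eqref{emme}. The four levels are singles, doubles, triples, quadruples; the only upward transitions are double-to-triple (forced by $M_{3,2}=I$) and triple-to-quadruple (via $M_{4,3}$), and every other transition can drop back to a single. I would check four facts directly from the given blocks. (a) Every column of the first block row $(M_{1,1}\,|\,M_{1,2}\,|\,M_{1,3}\,|\,M_{1,4})$ is nonzero, so from any cone-type one reaches a single in one step. (b) $M_{1,1}^{\,2}>0$: each column of $M_{1,1}$ has exactly five $1$'s, and among those five rows one always finds a single $k$ whose own image in $M_{1,1}$ covers the remaining three rows, so two steps suffice to move between any two singles. (c) Each column $j$ of $M_{2,1}$ has exactly two $1$'s, in rows $2j-1$ and $2j$; combined with (b), every double is reachable from every single in three steps. (d) Since $M_{3,2}=I$ sends double $k$ to triple $k$, every triple is reachable from every single in four steps; and since each row of $M_{4,3}$ is nonzero (each quadruple being the image of two distinct triples), every quadruple is reachable from every single in five steps. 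Composing (a) with (b)--(d) then produces a directed walk between any ordered pair of cone-types, proving irreducibility.

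The main obstacle is the verification of (b), the only step not immediate from the sparsity pattern; it reduces to an $8$-case check on the rows covered by length-two walks inside $M_{1,1}$, which is routine bookkeeping but genuinely needs to be carried out. Once this is in place, together with the transparent column/row conditions in (a), (c) and (d), irreducibility is established; combined with the self-loops from the diagonal of $M_{1,1}$ this yields primitivity of $M$.
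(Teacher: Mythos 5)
Your proof is correct, but it takes a genuinely different route from the paper: the paper disposes of primitivity by direct computation, showing $M^5>0$ (more precisely, that all entries in the first $8$ rows of $M^2$ are strictly positive, likewise the first $16$ rows of $M^3$, the first $32$ rows of $M^4$, and finally all rows of $M^5$). Your argument via irreducibility plus aperiodicity is structural instead, and your ingredients all check out against the printed blocks: the diagonal of $M_{1,1}$ is indeed all $1$'s (self-loops at the singles, killing the period), every column of the top block row $(M_{1,1}\,|\,M_{1,2}\,|\,M_{1,3}\,|\,M_{1,4})$ is nonzero, column $j$ of $M_{2,1}$ has its two $1$'s in rows $2j-1$ and $2j$, the third block row is $(0\,|\,I\,|\,0\,|\,0)$, and every row of $M_{4,3}$ contains exactly two $1$'s. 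For (b), the one step you flag as laborious, you can skip the eight-case bookkeeping entirely: every row and every column of $M_{1,1}$ contains exactly five $1$'s, and two five-element subsets of an eight-element set must intersect, so $M_{1,1}^2>0$ by pigeonhole. As for what each approach buys: the paper's computation is a certificate that yields the exponent $5$ but requires trusting (or redoing) powers of a $48\times 48$ matrix, whereas yours is hand-checkable from the block structure and explains \emph{why} primitivity holds --- a directed cycle through the levels singles--doubles--triples--quadruples together with loops at the singles --- at the cost of a slightly larger exponent (padding your walks with self-loops gives $M^6>0$ rather than $M^5>0$, which is immaterial for the statement). One small wording slip: the transitions singles-to-doubles via $M_{2,1}$ are also ``upward,'' not only double-to-triple and triple-to-quadruple, but this does not affect your argument, which uses only (a)--(d).
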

\begin{proof}
A direct computation shows that $M^5>0.$

More precisely,
all elements in the first $8$ rows of $M^2$ are strictly positive,
and the same for the first $16$ rows of $M^3$,
the first $32$ rows of $M^4$, and
all rows in  $M^5$.
\end{proof}

As a consequence, by Perron-Frobenius Theorem \cite[Theorem 1.1]{Se},
we obtain the following
\begin{prop}
There exists an eigenvalue $r$ of $M$ such that:
\begin{itemize}
\item[1)] $r$ is real, and $r >0$;
\item[2)]  $r$ is associated to strictly positive left and right eigenvectors;
\item[3)] $r > |\lambda|$  for any eigenvalue $\lambda\neq r$;
\item[4)] The eigenvectors associated with $r$ are unique up to constant multiples;
\item[5)] $r$ is a simple root of the characteristic equation of $T.$
\end{itemize}
\end{prop}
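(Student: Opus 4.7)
The plan is simply to invoke the Perron--Frobenius theorem for primitive non-negative matrices, as stated in Seneta \cite[Theorem 1.1]{Se}. The previous proposition has already done the non-trivial work, namely verifying that $M^{5}>0$, which is precisely the definition of primitivity given in Definition \ref{pri}.

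First I would observe that primitivity implies irreducibility: if some power $M^{k}$ is strictly positive then for every pair of indices $(i,j)$ there is a path of length $k$ from $i$ to $j$ in the associated directed graph, so $M$ cannot be permuted into block-triangular form. The classical Perron--Frobenius theorem for irreducible non-negative matrices then yields items 1), 2), 4), and 5): the existence of a positive real eigenvalue $r$ equal to the spectral radius, the existence of strictly positive left and right eigenvectors for $r$, their uniqueness up to scalar multiples, and the simplicity of $r$ as a root of the characteristic polynomial of $M$.

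The remaining item 3), the strict inequality $r>|\lambda|$ for every other eigenvalue $\lambda$, is precisely the additional conclusion that distinguishes primitive matrices from merely irreducible ones: for a periodic irreducible matrix there can be other eigenvalues on the spectral circle, but primitivity rules this out. Hence the whole statement reduces to a direct application of \cite[Theorem 1.1]{Se}, and I would write the proof essentially as: \emph{Since $M$ is primitive by the preceding proposition, it is in particular irreducible and aperiodic, and the Perron--Frobenius theorem \cite[Theorem 1.1]{Se} yields conclusions 1)--5).} There is no genuine obstacle here; the content of this proposition is exactly the content of the cited theorem, and all the real work was done in proving primitivity.
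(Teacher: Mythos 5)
Your proposal is correct and matches the paper exactly: the paper states this proposition as an immediate consequence of the Perron--Frobenius theorem \cite[Theorem 1.1]{Se}, with all the substantive work contained in the preceding proposition establishing primitivity via $M^5>0$. Your added remarks on primitivity implying irreducibility and on item 3) being the feature distinguishing primitive from merely irreducible matrices are accurate glosses, not a different route.
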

\section{Elementary multiplicative functions}\label{multi}

In this section we recall the definition of vector-valued elementary multiplicative functions on $\G,$
by Kuhn and Steger, and we show how they can be easily computed by means of the matrix
of cone types.

To set the vectorial context we need to define maps between finite
dimensional vector spaces indexed by cone types. In this section we use
letters as $a,b,\dots$ to denote arbitrary generators, not to be confused with
the set of generators provided in \eqref{gamma2}.

We consider triples $(a,c',c),$ where $c,c'$ are cone types, and $a\in A.$
We call a triple {\em admissible} if, given
$c=\mC(z)$ for some $z\in\G,$ we have $a\in A\cap {\mC}(z),$ (so that  if $z$ is
represented by a geodesic word $u$, then  $ua$ is also a geodesic) and
$c'=\mC(za),$ i.e. $c'$ is the cone-type of the $a$-successor of $z.$

 A matrix system (system in short) $(V_c, H_{a,c',c})$ consists of
finite dimensional complex vector spaces $V_c,$ for each cone-type $c$, and linear maps
$H_{a,c',c}:V_c\rightarrow V_{c'}$ for each admissible triple $(a,c',c).$
For non-admissible triples $(a,c',c)$ we set $H_{a,c',c}=0$.

\begin{defn}\label{mult}
For $x,y\in G$, consider the cone $\mC(x,y)$ with cone-type
$c=\mC(x^{-1}y)$
, $v_c\in V_c.$

The elementary multiplicative function $\mu[{\mC}(x,y),v_c]$ is defined as
$$
\mu[{\mC}(x,y),v_c](z)=\left\{\begin{array}{ll}
0,&z\notin{\mC}(x,y),\\ \\
v_c,& z=y,\\ \\
\!\!\displaystyle\sum_{\atopn{\atopn{a\in A}{ya\in{\mathcal C}(x,y)}}
{c'={\mathcal C}(x^{-1}ya)}} 
{\mu[{\mC}(x,ya), H_{a,c',c}(v_c)](z)},& z\neq y.
\end{array}\right.
$$
\end{defn}

Note that the action of $\Gamma_2$ on cones by translation (see Definition \ref{tk}),
$\gamma{\mC}(x,y)={\mC}(\gamma x,\gamma y)$, $\gamma\in\Gamma_2$, implies
\begin{equation}\label{action}
\mu[{\mC}(x,y),v_c](\gamma^{-1}z)=
\mu[{\mC}(\gamma x,\gamma y),v_c](z).
\end{equation}

The recursive definition yields an equivalent definition of elementary multiplicative function
 in terms of geodesics between two vertices.

Note that any geodesic between two vertices has the same length.
Also, since $\G$ is hyperbolic, the number
of geodesics between two fixed vertices is finite.

We recall that elements of $\mR$ are geodesic  sub-word of a
cyclic permutation of the fundamental relation $[a,b][c,d]=e$
or of its inverse $[d,c][b,a]=e.$

Consider any  geodesic word representing $y\in G$,
say $y=w_1\dots w_n,$  $w_i\in A,$ and the set of all geodesic quadruples
in $w_1w_2\dots w_n$ which belong to $\mR.$

\begin{defn}
For $y\in G$, $\mR_y$ is defined as the set of all geodesic quadruples, in any
geodesic word representing $y,$  which belong to $\mR$, i.e.
\begin{eqnarray*}
\mR_y&=&\left\{w_{i}w_{i+1}w_{i+2}w_{i+3}\in \mR,\right.\\
& &\left.\textrm{for all geodesic words}\, w_1w_2\dots w_n=y,\, w_i\in A\right\}.
\end{eqnarray*}
If $q\in \mR_y$,  the quadruple  $q'$ is called the \textit{twin} of
$q$
if $q=q'$ represents the same group element, but $q\neq q'$ as a geodesic path in
the Caley graph ($q'$ always exists).
\end{defn}

Note that if $q\in \mR_y,$ and $q'$ is the twin of $q,$ then also $q'\in R_y.$
Also $R_y=\emptyset$ means that, if $y\in\mC(e,a),$ for a given $a\in A$, there is
only one geodesic from $e$ to $y$ passing through $a$.

\begin{rem}\label{more}
If a given geodesic word  contains more then one quadruple
in $\mR_y,$ then they can have  at most one element in common, since any two
octagons have at most one edge in common.
One such example is:
$$y=(aba^{-1}b^{-1})a^{-1}d c=(d c d^{-1} c^{-1})a^{-1}d c=
aba^{-1}(a^{-1}b^{-1}c d).$$
Also note that the replacement of a quadruple $q$ by its twin $q'=q$ can change
the number of quadruples in $R_y.$

  In the  example above, $aba^{-1}b^{-1}a^{-1}d c$ contains $2$ quadruples:
$q_1=aba^{-1}b^{-1}$ and $q_2=b^{-1}a^{-1}d c$, but
if we replace $q_1$ with $q'_1=q_1$ then the number of quadruples drops to $1$.
This reflects the number of different geodesic paths from $e$ to $y$.

Hence it is necessary to introduce a  notation which takes into account
the several occurrences of quadruples.
\end{rem}

\begin{notat}\label{ord}
The notation follows a hierarchical
dyadic approach.

Since both the number of geodesic words representing $y$ and the number of
generators is finite, the {\em first quadruple} in $R_y$  can be defined as follows.
First let us fix an order on generators and on its inverses.

Let $i=1,\dots, n$, be the smallest index such that
$y=w_1w_2\dots w_n,$  $w_{i}w_{i+1}w_{i+2}w_{i+3}\in R_y,$   and $w_i$
is the smallest in the given order on generators.

We set $w_{0,j}=w_{j},$ for $j=i,\dots,i+3$ and we define the {\em first quadruple} in $R_y$ as
$q_0=w_{0,i}w_{0,i+1}w_{0,i+2}w_{0,i+3};$
its twin in $R_y$ is denoted  by $q_1=w_{1,i}w_{1,i+1}w_{1,i+2}w_{1,i+3}$.
Hence $q_0=q_1$ as a geodesic word, but $q_0\neq q_1$ as a geodesic path.

If we set $\delta_i=0,1,$  at the
$n$th stage $q_{(\delta_1,\dots,\delta_{n-1},0)}$ denotes the next quadruple in $R_y$ after
$q_{(\delta_1,\dots,\delta_{n-1})}$, while
$q_{(\delta_1,\dots,\delta_{n-1},1)}$ denotes the twin of $q_{(\delta_1,\dots,\delta_{n-1},0)}$.

In the example above, ordering as in Table \ref{tab}, $$b^{-1}<a<d<c^{-1}<d^{-1}<c<b<a^{-1},$$
we have
$$\begin{array}{lcl}
y&=&(aba^{-1}b^{-1})a^{-1}d c
=(d c d^{-1} c^{-1})a^{-1}d c
= aba^{-1}(a^{-1}b^{-1}c d).\\ \\
q_0&=&aba^{-1}b^{-1},\quad q_1=d c d^{-1} c^{-1}\\
q_{(0,0)}&=&b^{-1}a^{-1}d c,\quad
q_{(0,1)}=a^{-1}b^{-1}c d.
\end{array}$$

Finally, we shall use the notation $\delta=(\delta_1,\dots,\delta_{n}),$
and $q_{\delta}$ for short; if $\delta=(\delta_1,\dots,\delta_{n-1},0),$
and
$$q_{\delta}=w_{\delta_1,\dots,\delta_{n-1},0,i}
w_{\delta_1,\dots,\delta_{n-1},0,i+1}w_{\delta_1,\dots,\delta_{n-1},0,i+2}
w_{\delta_1,\dots,\delta_{n-1},0,i+3},$$
then we shall indicate its twin by setting ${\delta}'=(\delta_1,\dots,\delta_{n-1},1),$
and
$$q_{{\delta}'}=w_{\delta_1,\dots,\delta_{n-1},1,i}\dots w_{\delta_1,\dots,\delta_{n-1},1,i+3}.$$

If there is no confusion, we shall omit $\delta$ at all.

\end{notat}

The following lemmas will be crucial for an alternative expression of elementary
multiplicative functions.

\begin{lem}\label{app1}
Let $x,y\in G$ be  such that $R_y\neq\emptyset,$
and $y\in\mC(x).$ 

Let $i=1,\dots, n$, be the smallest index such that
$y=w_1w_2\dots w_n,$ and
$w_{i}w_{i+1}w_{i+2}w_{i+3}\in R_y.$
Denote it by $w_{0,i}w_{0,i+1}w_{0,i+2}w_{0,i+3},$
and its twin by $w_{1,i}w_{1,i+1}w_{1,i+2}w_{1,i+3}.$

Let $q_0$ be the first quadruple in $R_y,$ say $y=w_1w_2\dots w_n$ and
$q_0=w_{0,i}w_{0,i+1}w_{0,i+2}w_{0,i+3}.$

Let $a\in A\cap \mC(x).$ 
\begin{itemize}
\item[] If $i=1,$ then
$$xy\in{\mC}(e,xa)\Leftrightarrow \text{either} \;w_{0,1}=w_1=a, \text{or}\;
w_{1,1}=a;$$
\item[] If $ i>1,$ then
$$xy\in{\mC}(e,xa)\Leftrightarrow w_1=a.$$
\end{itemize}
\end{lem}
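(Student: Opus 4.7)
The plan is to translate the geometric condition $xy \in \mC(e,xa)$ into a combinatorial statement about geodesic representations of $y$, and then classify which first letters are possible for such representations.

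\textbf{Reformulation.} Since $y \in \mC(x)$ gives $d(e,xy) = |x|+|y|$ and $a \in A \cap \mC(x)$ gives $d(e,xa) = |x|+1$, left-invariance collapses the defining equality $d(e,xy) = d(e,xa) + d(xa,xy)$ to $d(a,y) = |y|-1$. The latter is equivalent to $y = a\,y'$ with $|y'| = |y|-1$, that is, to $y$ admitting a geodesic word whose first letter is $a$. Hence $xy \in \mC(e,xa)$ if and only if $a$ is the first letter of \emph{some} geodesic representation of $y$.

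\textbf{Classifying first letters.} The heart of the proof is to show that the set of first letters of geodesic representations of $y$ is $\{w_1\}$ when $i > 1$ and $\{w_{0,1}, w_{1,1}\}$ when $i = 1$. I would argue by contradiction. Suppose $v_1 v_2 \dots v_n$ is a geodesic representation of $y$ with $v_1 \neq w_1$. Then $v_1^{-1} w_1 w_2 \dots w_n$ is a word of length $n+1$ representing the element $v_2 \dots v_n$ of length $n-1$, so it is non-geodesic with length defect exactly $2$. No free cancellation is available at the junction (since $v_1 \neq w_1$), and $w_1 w_2 \dots w_n$ itself is geodesic hence free of any sub-word of length $\geq 5$ drawn from a cyclic permutation of the relator. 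The Dehn-style shortening criterion for $\G$ (the same small-cancellation content that underlies Lemmas \ref{lemmatutto} and \ref{lemma4}) therefore forces the shortening to come from a single length-$5$ sub-word of a cyclic permutation of the relator situated at the very start: the sub-word must be $v_1^{-1} w_1 w_2 w_3 w_4$. In particular $w_1 w_2 w_3 w_4 \in \mR$, so $i = 1$; and reading the cyclic permutation of length $8$ that extends $v_1^{-1} w_1 w_2 w_3 w_4$ identifies $w_1 w_2 w_3 w_4$ as a length-$4$ initial block whose twin begins precisely with $v_1$, i.e.\ $v_1 = w_{1,1}$. Realizability of both $w_{0,1}$ and $w_{1,1}$ when $i=1$ is immediate: the first by the given representation, the second by swapping $q_0$ with its twin $q_1$.

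\textbf{Conclusion.} Combining the two steps gives exactly the dichotomy stated in the lemma: if $i > 1$ the only admissible first letter is $w_1$, whereas if $i = 1$ the admissible first letters are $w_{0,1}$ and $w_{1,1}$. The main obstacle I anticipate is the Dehn-style shortening step, namely arguing rigorously that a word exceeding the length of the element it represents by $2$ must contain a length-$5$ sub-word of a cyclic permutation of the relator, located so as to straddle the $v_1^{-1}$ junction. This is the small-cancellation content of the genus-$g$ surface relator and is already the operative mechanism behind Section \ref{how}, so it can be invoked uniformly here rather than reproved.
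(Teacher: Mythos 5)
Your proposal is correct and follows essentially the same route as the paper: both reduce $xy\in\mC(e,xa)$ by left-invariance to $d(e,a^{-1}w_1\dots w_n)=n-1$, i.e.\ to $a$ being the first letter of some geodesic word for $y$, and both then force the shortening to straddle the junction, so that either $a^{-1}w_1=e$ or $a^{-1}w_1w_2w_3w_4$ is a five-letter sub-word of a cyclic permutation of the relator, which yields $i=1$ and $a=w_{1,1}$ via the twin. Even your decision to invoke the Dehn-style shortening criterion rather than reprove it matches the paper, which asserts that step at the same level of detail.
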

\begin{proof}
First note that
$xy\in{\mC}(e,x a)$ means
$$d(e,x y)=d(e,x a)+d(x a,x y)=d(e,x a)+d(a,w_1w_2\dots w_n).$$
Since $y\in\mC(x),$ and $a\in \mC(x),$ the latter is equivalent to
$$ |x|+n
=|x|+1+d(a,w_1w_2\dots w_n).$$
Therefore $xy\in{\mC}(e,xa)$ is equivalent to
\begin{equation}\label{erre}
n-1=d(a,w_1w_2\dots w_n)=d(e,a^{-1}w_1w_2\dots w_n).
\end{equation}
If $ i=1,$ and $a^{-1}w_1\neq e,$ then \eqref{erre} implies
$a^{-1}w_1w_2w_3w_4\in R$ and it equals a geodesic word of length $3$.
The latter implies  also $w_1w_2w_3w_4\in R.$ Being
$$w_{1}w_2w_{3}w_{4}=w_{0,1}w_{0,2}w_{0,3}w_{0,4}=w_{1,1}w_{1,2}w_{1,3}w_{1,4},$$
then
$$a^{-1}w_{1}w_2w_{3}w_{4}=a^{-1}w_{1,1}w_{1,2}w_{1,3}w_{1,4},$$
and both are equal to a geodesic word of length $3,$
hence necessarily $a^{-1}w_{1,1}=e.$

If $ i>1,$ then $w_{1}w_2w_{3}w_{4}\notin R.$ Hence \eqref{erre} implies $a^{-1}w_1=e$.

The reversed implications are immediate.
\end{proof}

A similar reasoning leads to the following lemma.
\begin{lem}\label{app2}
Let $x,y\in G,$  such that $R_y=\emptyset,$ and $y\in\mC(x).$

Let $a\in A\cap\mC(x).$  Then, if  $w_1w_2\dots w_n$ is the (only)  geodesic word representing $y,$ we have
 $$xy\in{\mC}(e,xa)\Leftrightarrow w_1=a.$$
\end{lem}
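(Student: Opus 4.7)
The plan is to mirror the computation carried out in Lemma \ref{app1}. Starting from the definition of $\mC(e,xa)$ and using that $y\in\mC(x)$ and $a\in\mC(x)$, the condition $xy\in\mC(e,xa)$ reduces, by the same manipulation leading to equation \eqref{erre}, to
$$n-1 = d(e, a^{-1}w_1w_2\cdots w_n).$$
The reverse implication is then immediate: if $w_1=a$, the left-hand word simplifies to $w_2\cdots w_n$, which is geodesic of length $n-1$.

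For the forward implication I would argue by contradiction, assuming $w_1\neq a$. Then $a^{-1}w_1\neq e$, so the word $a^{-1}w_1\cdots w_n$ of formal length $n+1$ represents an element of length $n-1$, and the drop in length must come from a relator substitution rather than an inverse-pair cancellation. Because $w_1\cdots w_n$ is geodesic, no length-$5$ window lying entirely inside $w_1\cdots w_n$ can be a geodesic subword of a cyclic permutation of the relator (otherwise one could replace it by its length-$3$ complement and shorten the representation of $y$). Hence the reducing $5$-letter window must begin with $a^{-1}$, so $a^{-1}w_1w_2w_3w_4$ is a geodesic $5$-letter subword of a cyclic permutation of $[a,b][c,d]$ or of its inverse, equal as a group element to some geodesic word $v_1v_2v_3$ of length $3$.

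Rearranging yields $w_1w_2w_3w_4 = a v_1 v_2 v_3$ in $\Gamma_2$, exhibiting two distinct geodesic length-$4$ representations of the same element. By the very definition of $\mR$, this forces $w_1w_2w_3w_4 \in \mR$, hence $w_1w_2w_3w_4 \in R_y$, contradicting the hypothesis $R_y=\emptyset$. Thus $w_1=a$, completing the proof.

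The most delicate point, which I expect to be the main obstacle, is justifying that every length reduction from $n+1$ to $n-1$ in $\Gamma_2$ is controlled by a single length-$5$ substitution from the relator, ruling out cascades of several relator moves. This can be handled by considering a shortest reduction and observing that its first relator move must already fall into one of the two cases analyzed above; combined with the uniqueness of cone types for elements of $\mR$ given by Lemma \ref{uni} and the structural analysis of Section \ref{how}, the argument collapses to the short-window case described.
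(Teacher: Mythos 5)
Your proposal is correct and follows essentially the same route as the paper, which in fact gives no separate argument for this lemma but states only that ``a similar reasoning'' to Lemma \ref{app1} applies: the reduction of $xy\in\mC(e,xa)$ to $n-1=d(e,a^{-1}w_1\cdots w_n)$ is exactly the paper's equation \eqref{erre}, and your contradiction step (the reducing $5$-letter relator window must be $a^{-1}w_1w_2w_3w_4$, forcing $w_1w_2w_3w_4\in\mR$ and hence $R_y\neq\emptyset$) is precisely how the paper's $i>1$ case of Lemma \ref{app1} transfers to the hypothesis $R_y=\emptyset$. One cosmetic remark: the conclusion $w_1w_2w_3w_4\in\mR$ follows most directly because it is a $4$-letter subword of the relator window $a^{-1}w_1w_2w_3w_4$, rather than ``by the very definition of $\mR$'' from the existence of two geodesic representations, though that equivalence also holds at the level of rigor (Dehn's algorithm for the one-relator surface presentation) used implicitly throughout the paper.
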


In the following, the
value of an elementary multiplicative function $\mu[{\mC}(e,b), v]$
in $z\in\mC(e,b)$ is obtained as a (finite) sum over all
geodesic paths from $b$ to $z$.

\begin{prop}\label{multgeo}
Let $b\in A,$ $c_0=\mC(b),$ and $v_{c_0}\in V_{c_0}.$ Then, for $z\in{\mC}(e,b),$ $z\neq b,$ we have
$$
\mu[{\mC}(e,b), v_{c_0}](z)=\sum_{\atopn{\text{geodesic words}\,w_1w_2\dots w_n}{b^{-1}z=w_1w_2\dots w_n}}
{\left[\prod_{j=1}^n{H_{w_j,c_j,c_{j-1}}}\right]}(v_{c_0}),
$$
with notation $c_0=\mC(b),$ $c_j=\mC(bw_1\dots w_j),$ and
$$\prod_{j=1}^n{H_{w_j,c_j,c_{j-1}}}= H_{w_n,c_n,c_{n-1}} H_{w_{n-1},c_{n-1},c_{n-2}}\dots H_{w_1,c_1,c_0}.$$
\end{prop}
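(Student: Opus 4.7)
The plan is to prove the formula by induction on $n = d(b,z) = |b^{-1}z|$, using the recursive definition of $\mu$ from Definition \ref{mult} at each step.

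For the base case $n = 1$, we have $z = ba$ for a unique $a \in A \cap \mathcal{C}(b)$ and a single geodesic word $w_1 = a$ representing $b^{-1}z$. Applying the $z \neq y$ clause of Definition \ref{mult} to $\mu[\mathcal{C}(e,b), v_{c_0}](z)$, the only nonzero summand in the sum over successors $ba'$ of $b$ is the one with $a' = a$, because $z \in \mathcal{C}(e, ba')$ together with $d(ba', z) \le 0$ forces $a' = a$. The recursive call then falls into the $z = y$ clause at $y = ba$, yielding $H_{a, c_1, c_0}(v_{c_0})$ with $c_1 = \mathcal{C}(ba)$, which matches the right-hand side.

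For the inductive step, assume the formula for all pairs $(b', z')$ with $d(b', z') < n$, and let $d(b, z) = n \geq 2$. The recursion gives
$$\mu[\mathcal{C}(e,b), v_{c_0}](z) = \sum_{\substack{a \in A \cap \mathcal{C}(b) \\ z \in \mathcal{C}(e, ba)}} \mu\bigl[\mathcal{C}(e, ba),\, H_{a, c_1(a), c_0}(v_{c_0})\bigr](z),$$
where $c_1(a) = \mathcal{C}(ba)$ and terms with $z \notin \mathcal{C}(e, ba)$ vanish by the first clause of Definition \ref{mult}. Applying the inductive hypothesis to each summand with base vertex $ba$ and input vector $H_{a, c_1(a), c_0}(v_{c_0}) \in V_{c_1(a)}$ expresses it as a sum, indexed by geodesic words $w_2 w_3 \cdots w_n$ representing $(ba)^{-1}z$, of compositions $\bigl[\prod_{j=2}^n H_{w_j, c_j, c_{j-1}}\bigr] \circ H_{a, c_1(a), c_0}(v_{c_0})$, where $c_j = \mathcal{C}(ba\, w_2\cdots w_j) = \mathcal{C}(b w_1 w_2\cdots w_j)$ with $w_1 = a$. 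Setting $w_1 = a$ and concatenating the chains telescopes into the product on the right-hand side.

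The one combinatorial point requiring care is that the outer sum over $a$ followed by the inner sum over geodesics from $ba$ to $z$ parameterizes each geodesic word $w_1 w_2 \cdots w_n$ representing $b^{-1}z$ exactly once. This holds because, for $ba \in \mathcal{C}(e,b)$, the condition $z \in \mathcal{C}(e, ba)$ is equivalent to $d(e,z) = |ba| + d(ba,z)$, i.e.\ to $d(ba, z) = n - 1$, which is precisely the requirement that $ba$ is the second vertex of at least one geodesic from $b$ to $z$. Hence the admissible first letters $w_1 = a$ range over exactly the indexing set of the outer sum, and $w_1 w_2 \cdots w_n \leftrightarrow (w_1,\, w_2\cdots w_n)$ is a bijection with the indexing set produced by induction. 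The main obstacle is the bookkeeping of the cone-type subscripts across the two steps, but this is resolved once one observes that $bw_1\cdots w_j = (ba)(w_2\cdots w_j)$, so the cone types $c_j$ computed from the recursive expansion agree with those supplied by the inductive hypothesis.
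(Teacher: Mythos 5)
Your proof is correct, but it takes a genuinely different route from the paper's. The paper does not induct on $d(b,z)$; it unfolds the recursion of Definition \ref{mult} along the word itself, splitting into the cases $\mathcal{R}_y=\emptyset$ and $\mathcal{R}_y\neq\emptyset$ and invoking Lemmas \ref{app1} and \ref{app2}, which rest on the $\Gamma_2$-specific combinatorics of geodesics: the recursion branches exactly when a quadruple of $\mathcal{R}_y$ begins at the current position, in which case the two admissible first letters are $w_{0,1}$ and $w_{1,1}$ (the quadruple and its twin), and otherwise the successor is unique. You replace all of this with the abstract observation that the nonvanishing outer terms are exactly those $a$ with $d(ba,z)=n-1$, i.e.\ the possible first letters of geodesic words representing $b^{-1}z$, so that concatenation gives a bijection between geodesic words for $b^{-1}z$ and pairs (first letter, geodesic word for $(ba)^{-1}z$); this is elementary metric reasoning valid in any finitely generated group, so your argument is shorter and more general, never touching the relator, quadruples, or hyperbolicity. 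What the paper's heavier route buys is an explicit description of \emph{where} and \emph{how} the sum over geodesics bifurcates, which is precisely the quadruple/twin bookkeeping of Remark \ref{more} and Notation \ref{ord} exploited in the rest of the section. One point you should make explicit: the proposition as stated has $b\in A$, while your inductive step applies the formula with base vertex $ba\notin A$, so the statement you actually induct on must be strengthened to cones $\mathcal{C}(e,y)$ with arbitrary apex $y$, apex cone type $c_0=\mathcal{C}(y)$ and $v\in V_{c_0}$; this is harmless, since your base case and your bijection nowhere use $|b|=1$, but as written the induction hypothesis quantifies over a family of statements that the proposition itself does not assert.
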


\begin{proof}
Since $z\in{\mC}(e,b),$ $z\neq b,$  then $d(e,z)=d(e,b)+d(b,z)$ and
we can write $b^{-1}z=w_1w_2\dots w_n,$ where
$w_1w_2\dots w_n$ is a geodesic word, $n=|z|-1.$ Set $y=b^{-1}z.$

We have
either $R_y=\emptyset,$ or $R_y\neq \emptyset.$

In the first case, $w_1w_2\dots w_n$ is the only  geodesic word representing
$y$, hence by Definition \ref{mult} and  Lemma \ref{app2} applied to $x=b,$
\begin{eqnarray*}
\mu[{\mC}(e,b), v_{c_0}](z)\!\!\!&=&\!\!\!\!
\sum_{\atopn{\atopn{b'\in A}{bb'\in{\mathcal C}(e,b)}}{c'={\mathcal C}(bb')}}
{\mu[{\mC}(e,bb'), H_{b',c',c_0}(v_{c_0})](z)}\\ \\
&=&\mu[{\mC}(e,b w_1), H_{w_1,c_1,c_0}(v_{c_0})](b w_1\dots w_n)
\end{eqnarray*}
where $c_0=\mC(b),$ $c_1=\mC(b w_1),$ since all other instances are null.
A repeated application of Lemma \ref{app2} yields
\begin{eqnarray*}
& &\mu[{\mC}(e,b), v_{c_0}](b w_1\dots w_n)\\
&=&
\mu[{\mC}(e,b w_1\dots w_n), H_{w_n,c_n,c_{n-1}}\dots H_{w_1,c_1,c_0}(v_{c_0})]
(bw_1\dots w_n)\\
&=&H_{w_n,c_n,c_{n-1}}\dots H_{w_1,c_1,c_0}(v_{c_0}),
\end{eqnarray*}
as desired.

If $R_y\neq \emptyset,$ instead, we consider the first  quadruple
$q_0\in R_y$, and its twin $q_1$,
$$q_0=w_{0,i_0}\dots w_{0,i_0+3}= w_{1,i_1}\dots w_{1,i_1+3}=q_1.$$
We consider the quadruple next to $q_0$, if any, and its twin, say
$$q_{(0,0)}=w_{(0,0),i_{(0,0)}}\dots w_{(0,0),i_{(0,0)}+3}=
w_{(0,1),i_{(0,1)}}\dots w_{(0,1),i_{(0,1)}+3}
=q_{(0,1)},$$
where ${i_0+2}< {i_{(0,0)}}$ and ${i_0+3}\leq {i_{(0,0)}}.$

Similarly for $q_1$, if any, say
$$q_{(1,0)}=w_{(1,0),i_{(1,0)}}\dots w_{(1,0),i_{(1,0)}+3}=
w_{(1,1),i_{(1,1)}}\dots w_{(1,1),i_{(1,1)}+3}=q_{(1,1)},$$
with $i_1+3\leq i_{(1,0)},$  and so on so forth...

Consider $q_0.$
Lemma \ref{app1} implies
\begin{eqnarray*}
& &\mu[{\mC}(e,b), v_{c_0}](z)=
\sum_{\atopn{\atopn{b'\in A}{bb'\in{\mathcal C}(e,b)}}{c'={\mathcal C}(bb')}}
{\mu[{\mC}(e,bb'), H_{b',c',c_0}(v_{c_0})](z)}\\ \\
\!\!\!\!\!\!&=&\!\!\!\!
\left\{\!\!
\begin{array}{ll}\!\!
{\begin{array}{l}
\mu[{\mC}(e,bw_{0,1}), H_{w_{0,1},c_{0,1},c_0}(v_{c_0})](bw_1\dots w_4\dots w_n)\\ \\
+ \mu[{\mC}(e,bw_{1,1}), H_{w_{1,1},c_{1,1},c_0}(v_{c_0})](bw_{1,1}\dots w_{1,4}\dots w_n),
\end{array}}
&\!\!\!\text{if}\; i_0=1,\\ \\ \\
\mu[{\mC}(e,bw_{1}), H_{w_{1},c_{1},c_0}(v_{c_0})](bw_1\dots w_4\dots w_n),&\!\!\!\text{if}\; i_0>1.
\end{array}
\right.
\end{eqnarray*}
where, in the $i_0=1$ case,  $w_{0,1}=w_1,$ $c_1=c_{0,1}=\mC(bw_{0,1}),$ and $c_{1,1}=\mC(bw_{1,1}).$

 Since the subsequent quadruple, if any,
 has  in common with the previous one
at most one element (either $w_4$ or $w_{1,4}$), see Remark \ref{more},
 by Lemma~\ref{app1}, with obvious meaning of  symbols,  we get
in the  $i_0=1$ case (if there is no subsequent
quadruple, we set $i_{(\delta_j,0)}=n+1$)
\begin{eqnarray*}
& &\!\!\!\mu[{\mC}(e,b), v_{c_0}](z)\\ \\
&\!\!\!\!\!\!=&\!\!\!
\mu[{\mC}(e,bw_{0,1}w_{0,2}w_{0,3}), H_{w_{0,3},c_{0,3},c_{0,2}}\dots H_{w_{0,1},c_{0,1},c_0}(v_{c_0})](bw_1\dots 
w_n)\\ \\
&\!\!\!\!\!\! +&\!\!\! \mu[{\mC}(e,bw_{1,1}w_{1,2}w_{1,3}), H_{w_{1,3},c_{1,3},c_{1,2}}\dots
H_{w_{1,1},c_{1,1},c_0}(v_{c_0})](bw_{1,1}\dots 
w_n)\\ \\
&\!\!\!\!\!\! =&\dots\\
&\!\!\!\!\!\!=&\!\!\!
\mu[{\mC}(e,bw_{0,1}
\dots w_{i_{(0,0)}-1}),
H_{w_{i_{(0,0)}-1},c_{i_{(0,0)}-1},c_{i_{(0,0)}-2}}\dots\\
& &\hspace{.5cm} \dots
H_{w_{0,3},c_{0,3},c_{0,2}}\dots
 H_{w_{0,1},c_{0,1},c_0}(v_{c_0})](bw_1
\dots w_{i_{(0,0)}-1}\dots w_n)\\ \\
&\!\!\!\!\!\!+ &\!\!\!\!
\mu[{\mC}(e,bw_{1,1}
\dots w_{i_{(1,0)}-1}), H_{w_{i_{(1,0)}-1},c_{i_{(1,0)}-1},c_{i_{(1,0)}-2}}\dots\\
& &\hspace{.5cm}\dots H_{w_{1,3},c_{1,3},c_{1,2}}\dots
H_{w_{1,1},c_{1,1},c_0}(v_{c_0})](bw_{1,1}\dots w_{i_{(1,0)}-1}\dots  w_n),
\end{eqnarray*}

while, if $i_0>1,$ we get
\begin{eqnarray*}
& &\!\!\mu[{\mC}(e,b), v_{c_0}](z)\\
&=&\!\!\mu[{\mC}(e,bw_{1}\dots w_{i_{0}-1}), H_{w_{i_{0}-1},c_{i_{0}-1},c_{i_{0}-2}}
\dots H_{w_{1},c_{1},c_0}(v_{c_0})](bw_1\dots w_n).
\end{eqnarray*}

We apply Lemma \ref{app1} recursively to any summand so generated, for any
subsequent quadruple and  corresponding twin
which contribute to a different way of writing $y$ as a geodesic word.

After a finite number of steps,we get
\begin{eqnarray*}
& &\mu[{\mC}(e,b), v_{c_0}](z)\\
&=&
\sum_{\atopn{\text{geodesic words}\, bw_1w_2\dots w_n}{\text{such that}\;z=bw_1w_2\dots w_n}}
\mu[{\mC}(e,bw_{1}\dots w_{n}), H_{w_{n},c_{n},c_{n-1}}\dots\\
& & \hspace{6cm}  \dots H_{w_{1},c_{1},c_0}(v_{c_0})](bw_1\dots w_n)\\ \\
&=&
\sum_{\atopn{\text{geodesic words}\, w_1w_2\dots w_n}{\text{such that}\;b^{-1}z=w_1w_2\dots w_n}}
{\left[\prod_{j=1}^n{H_{w_j,c_j,c_{j-1}}}\right]}(v_{c_0}).
\end{eqnarray*}
\end{proof}

We  provide now a realization of elementary multiplicative functions in terms
of the matrix of cone types $M$.
The key point is that any admissible triple $(a,c',c)$ is independent from $a.$
\begin{prop}\label{piatta}
Let $x,y\in\G$ such that $\mC(x)=\mC(y)$. Let $a\in A\cap \mC(x)$ and $b\in A\cap \mC(y).$
If $\mC(xa)=\mC(yb)$ then $a=b.$

As a consequence, if triples $(a,c',c),$ $(b,c',c)$ are admissible, then $a=b$
(and we can simply write $(c',c)$).
\end{prop}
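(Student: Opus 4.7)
The plan is to reduce to a finite check already carried out in Section \ref{successors}. Since $\mC(x)=\mC(y)$, Lemma \ref{uni}---together with the observation that $\mC(e)$ is represented only by the identity---yields a unique element $z\in\mR\cup\{e\}$ with $\mC(z)=\mC(x)=\mC(y)$, and in particular $A\cap\mC(x)=A\cap\mC(z)=A\cap\mC(y)$. Proposition \ref{ugua} then identifies $\mC(xa)=\mC(za)$ and $\mC(yb)=\mC(zb)$, so the hypothesis $\mC(xa)=\mC(yb)$ becomes $\mC(za)=\mC(zb)$. It therefore suffices to prove that for each such $z$ the successor map $a\mapsto\mC(za)$ on $A\cap\mC(z)$ is injective.

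The case $z=e$ follows directly from Lemma \ref{uni}, since distinct single generators yield distinct cone types. For $z\in\mR$ I would proceed by cases on whether $za$ and $zb$ lie in $\mR$. If both do, then $za\neq zb$ as group elements (cancel $z$), and Lemma \ref{uni} separates their cone types. If only one of them, say $za$, lies in $\mR$, then $\mC(za)$ is indexed by an $\mR$-element of length $|z|+1$ whereas $\mC(zb)$ is indexed, via Lemmas \ref{lemmatutto} and \ref{suff}, by a proper suffix of length at most $|z|$; these are distinct elements of $\mR$, so Lemma \ref{uni} applies. If neither lies in $\mR$---which is always the case when $|z|=4$---the reductions carried out in Section \ref{successors} express $\mC(za)=\mC(u_a)$ and $\mC(zb)=\mC(u_b)$ with $u_a,u_b\in\mR$ still terminating in the letters $a$ and $b$, respectively; hence $u_a\neq u_b$, and Lemma \ref{uni} gives $\mC(za)\neq\mC(zb)$.

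Equivalently, the proposition asserts that every entry of the matrix $M$ built in Section \ref{matrix} is $0$ or $1$ and that its column sums equal the valencies of the cone types, which is plain from the block description. I expect the main obstacle to be the $|z|=4$ subcase, where one may need to switch $z$ to its twin geodesic representation via the relator (as in Lemma \ref{lemma4}) before reading off the suffix reduction, and verify that after the switch the final letter of the reduced representative still records the chosen generator. This bookkeeping is precisely what Section \ref{successors} accomplishes for the quadruples, and it is what ultimately justifies the reduction step based on Proposition \ref{ugua}.
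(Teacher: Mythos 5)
Your proof is correct and takes essentially the same route as the paper: reduce via Proposition \ref{ugua} to a representative $z\in\mR$ (you also cover the $z=e$ case, which the paper's proof leaves implicit) and then conclude from the distinctness of the cone types of the successors of $z$, which the paper simply quotes from the computations of Section \ref{successors} while you re-derive it through your case analysis on whether $za,zb\in\mR$ together with Lemma \ref{uni}. The only slip is bibliographic: the \emph{existence} of $z$ with $\mC(z)=\mC(x)$ comes from Proposition \ref{prop1}, while Lemma \ref{uni} supplies only its uniqueness.
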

\begin{proof}
There exists $z\in\mR$ such that $\mC(x)=\mC(y)=\mC(z).$
By Proposition \ref{ugua}, we get
$$\mC(za)=\mC(xa)=\mC(yb)=\mC(zb),$$
where  $a,b\in A\cap \mC(z).$ By results of Section \ref{successors} we know that
different successors of $z\in\mR$ have different cone types, yielding $za=zb$ and so $a=b.$
\end{proof}

Let us consider the scalar case, first, where any matrix system is as follows:
$V_c=\C,$ for each cone type $c$, and, for any admissible triple
$({a,c',c}),$
 $H_{a,c',c}$  is multiplication by a non-zero complex number,  while $H_{a,c',c}=0$ otherwise.
Let $M=(m_{c',c})$ be the matrix \eqref{emme}, indexed by cone types.

We note that, for any  couple of cone types ${c',c}$ and for any $a\in A,$
 $m_{c',c}H_{a,c',c}=H_{a,c',c},$ and the latter is non-zero only for
admissible triples. Hence, by Proposition \ref{piatta}, we can set $H_{c',c}=m_{c',c}H_{a,c',c}$ and
consider a new matrix
$N=(H_{c',c}),$ with non-zero entries in the same positions as  $M$.

Also, for any cone type $c,$ let $V_c=(0\dots 1\dots 0)$ be the vector with $1$ at the $c$-position, and
$E_{c}=V_c^{\top}V_c $ be the $48\times 48$ matrix whose entries are all null except
the $(c,c)$ diagonal element (equal to $1$).

\begin{coro}[Scalar case]\label{multmatrix}
Let $b\in A,$ $c_0=\mC(b),$ and $v\in \C.$ Then,
for $z\in{\mC}(e,b),$ $z\neq b,$ we have
\begin{eqnarray*}
& &\mu[{\mC}(e,b),v](z)=
\sum_{\atopn{\text{geodesic words}\,w_1w_2\dots w_n}{b^{-1}z=w_1w_2\dots w_n}}
{\left[\prod_{j=1}^n{H_{c_j,c_{j-1}}}\right]} v\nonumber \\ \nonumber \\ 
& & =
V_{c_n}N\left[\sum_{\atopn{\text{geodesic words}\,w_1w_2\dots w_n}{b^{-1}z=w_1w_2\dots w_n}}
{E_{c_{n-1}} N E_{c_{n-2}}\dots N E_{c_1}}\right] N V_{c_0}^{\top}v,
\end{eqnarray*}
with notation $c_0=\mC(b),$ $c_j=\mC(bw_1\dots w_j),$ $c_n=\mC(z).$
\end{coro}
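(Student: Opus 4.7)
The plan is to verify the two equalities in order. The first is essentially a transcription of Proposition \ref{multgeo} to the scalar setting. The only simplification is that, by Proposition \ref{piatta}, for an admissible triple $(a,c',c)$ the generator $a$ is determined by the pair $(c',c),$ so the subscript $a$ may be dropped. With $H_{c',c}:=m_{c',c}H_{a,c',c}$ (extended by zero outside admissible triples, so that $N$ inherits the zero pattern of $M$), substituting into Proposition \ref{multgeo} yields the first identity.

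For the second equality, the key step is to show that for any single geodesic word $w_1\dots w_n$ representing $b^{-1}z,$ one has
$$\prod_{j=1}^n H_{c_j,c_{j-1}}=V_{c_n}\,N\,E_{c_{n-1}}\,N\,E_{c_{n-2}}\,\cdots\,N\,E_{c_1}\,N\,V_{c_0}^{\top}.$$
I would prove this by a telescoping argument starting from the right. Since $V_{c_0}^{\top}$ picks out the $c_0$-column, the vector $NV_{c_0}^{\top}$ has row-$c'$ entry $H_{c',c_0};$ applying $E_{c_1}$ zeros out every row except $c_1,$ leaving the value $H_{c_1,c_0}$ supported at row $c_1.$ Applying $N$ again rescales the $c_1$-column of $N$ by this scalar, and applying $E_{c_2}$ isolates the $c_2$-entry, giving $H_{c_2,c_1}H_{c_1,c_0}$ supported at row $c_2.$ Iterating shows that the vector produced just before the outermost $V_{c_n}N$ carries the scalar $\prod_{j=1}^{n-1}H_{c_j,c_{j-1}}$ at row $c_{n-1};$ the final $N$ lifts it to the scaled $c_{n-1}$-column of $N,$ and the row vector $V_{c_n}$ extracts its $c_n$-entry, producing the missing factor $H_{c_n,c_{n-1}}.$

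To finish, I would sum over all geodesic words representing $b^{-1}z.$ Since every such word starts at cone type $c_0=\mC(b)$ and ends at $c_n=\mC(z),$ the outer factors $V_{c_n}N$ and $NV_{c_0}^{\top}v$ are constant across the sum and can be pulled out by linearity, producing exactly the displayed bracketed expression. The computation is essentially bookkeeping; the main (mild) obstacle is the consistency of the extension $H_{c',c}=0$ on non-admissible $(c',c),$ which is guaranteed by Proposition \ref{piatta} together with the observation that $N$ and $M$ share the same support, so that the telescoping identity above reproduces Proposition \ref{multgeo} term-by-term and no spurious contributions are introduced.
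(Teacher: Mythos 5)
Your proposal is correct and takes essentially the same approach as the paper: the paper's proof consists of the single observation $V_{c'}NV_{c}^{\top}=H_{c',c}$ (equal to $H_{a,c',c}$ for admissible triples and $0$ otherwise), which—via $E_{c_j}=V_{c_j}^{\top}V_{c_j}$—is precisely your telescoping computation written compactly, with the outer factors pulled out by linearity exactly as you do. Your reduction of the first equality to Proposition \ref{multgeo} through Proposition \ref{piatta} and the definition $H_{c',c}=m_{c',c}H_{a,c',c}$ matches the paper's setup of the matrix $N$ preceding the corollary.
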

\begin{proof}
It is easy to see that, for any triple $(a,c',c),$
$$V_{c'}N V_{c}^{\top}=(0\dots 1\dots 0) N \left(\begin{array}{c}
0\\
\vdots\\
1\\
\vdots\\
0
\end{array}
\right)= H_{c',c},$$
and that the latter is $H_{a,c',c}$ if the triple is admissible, and $0$ otherwise.
\end{proof}

Also, by \eqref{action} we get a similar result for cones like ${\mC}(b,e).$
\begin{prop}\label{multgeo2}
Let $b\in A,$ $c_0=\mC(b^{-1}),$ and $v_{c_0}\in V_{c_0}.$ Then, for $z\in{\mC}(b,e),$ $z\neq e,$ we have
$$
\mu[{\mC}(b,e), v_{c_0}](z)=\sum_{\atopn{\text{geodesic words}\,w_1w_2\dots w_n}{z=w_1w_2\dots w_n}}
{\left[\prod_{j=1}^n{H_{w_j,c_j,c_{j-1}}}\right]}(v_{c_0}),
$$
where $c_0=\mC(b^{-1}),$ $c_j=\mC(b^{-1} w_1\dots w_j),$ and
$$\prod_{j=1}^n{H_{w_j,c_j,c_{j-1}}}= H_{w_n,c_n,c_{n-1}} H_{w_{n-1},c_{n-1},c_{n-2}}\dots H_{w_1,c_1,c_0}.$$

In the scalar case, with notation as in Corollary \ref{multmatrix}
\begin{eqnarray*}
& &\mu[{\mC}(b,e),v](z)=
\sum_{\atopn{\text{geodesic words}\,w_1w_2\dots w_n}{z=w_1w_2\dots w_n}}
{\left[\prod_{j=1}^n{H_{c_j,c_{j-1}}}\right]v}\nonumber \\ \nonumber \\ 
& & =
V_{c_n}N\left[\sum_{\atopn{\text{geodesic words}\,w_1w_2\dots w_n}
{z=w_1w_2\dots w_n}}
{E_{c_{n-1}} N E_{c_{n-2}}\dots N E_{c_1}}\right] N V_{c_0}^{\top}v,
\end{eqnarray*}
with notation $c_0=\mC(b^{-1}),$ $c_j=\mC(b^{-1}w_1\dots w_j),$ $c_n=\mC(b^{-1}z).$
\end{prop}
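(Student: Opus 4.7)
The plan is to derive Proposition~\ref{multgeo2} from Proposition~\ref{multgeo} by means of the translation identity \eqref{action}, rather than re-running the inductive unwinding used in the proof of Proposition~\ref{multgeo}. Applying \eqref{action} with $\gamma=b^{-1}$, $x=b$, $y=e$ yields
$$\mu[\mC(b,e),v_{c_0}](z) = \mu[\mC(b^{-1}b,b^{-1}e),v_{c_0}](b^{-1}z) = \mu[\mC(e,b^{-1}),v_{c_0}](b^{-1}z),$$
so the entire problem is transported to the setting of Proposition~\ref{multgeo} with $b$ replaced by the generator $b^{-1}\in A$ (which belongs to $A$ because $A$ is symmetric). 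Under this translation, $z\in\mC(b,e)$ with $z\neq e$ corresponds to $b^{-1}z\in\mC(e,b^{-1})$ with $b^{-1}z\neq b^{-1}$, and the required starting cone type $\mC(b^{-1})$ is exactly the $c_0$ in the hypothesis.

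Next I would unpack the right-hand side. Writing $(b^{-1})^{-1}(b^{-1}z)=z=w_1w_2\dots w_n$ as a geodesic word, Proposition~\ref{multgeo} produces
$$\mu[\mC(e,b^{-1}),v_{c_0}](b^{-1}z) = \sum_{\substack{\text{geo.\ words}\\ w_1\dots w_n=z}} \Bigl[\prod_{j=1}^n H_{w_j,c_j,c_{j-1}}\Bigr](v_{c_0}),$$
where the intermediate cone types along the geodesic from $b^{-1}$ to $b^{-1}z$ are $c_j=\mC(b^{-1}w_1\dots w_j)$, matching the definition of $c_j$ in the statement exactly. Combining with the identity from \eqref{action} gives the first displayed formula of the proposition.

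For the matrix form in the scalar case I would simply repeat the argument of Corollary~\ref{multmatrix} verbatim: using Proposition~\ref{piatta}, one has $V_{c'}NV_c^{\top}=H_{c',c}$, which equals $H_{a,c',c}$ for the unique $a$ making $(a,c',c)$ admissible and vanishes otherwise. Inserting the idempotents $E_{c_j}=V_{c_j}^{\top}V_{c_j}$ between consecutive copies of $N$ turns the composition $H_{w_n,c_n,c_{n-1}}\cdots H_{w_1,c_1,c_0}$ into the bracketed matrix product, while the outer factors $V_{c_n}N$ and $NV_{c_0}^{\top}$ select the final and initial cone type coordinates, with $c_0=\mC(b^{-1})$, $c_j=\mC(b^{-1}w_1\dots w_j)$, and $c_n=\mC(b^{-1}z)$.

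There is no real obstacle here: everything reduces to bookkeeping under left translation. The only points to double-check are that translation by $b^{-1}$ sends the cone $\mC(b,e)$ bijectively onto $\mC(e,b^{-1})$ (immediate from Definition~\ref{tk}) and that the cone types along the translated geodesic are indexed as $\mC(b^{-1}w_1\dots w_j)$ rather than $\mC(w_1\dots w_j)$ or $\mC(bw_1\dots w_j)$; this is precisely what the group action on cones from Definition~\ref{tk} records. Thus Proposition~\ref{multgeo2} is a change-of-basepoint restatement of Proposition~\ref{multgeo}, obtained by invoking \eqref{action} once.
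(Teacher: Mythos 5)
Your proposal is correct and takes essentially the same route as the paper: the paper's proof likewise applies the translation identity \eqref{action} with $\gamma=b^{-1}$ to get $\mu[\mC(b,e),v_{c_0}](z)=\mu[\mC(e,b^{-1}),v_{c_0}](b^{-1}z)$ and then invokes Proposition \ref{multgeo} with $b^{-1}$ in place of $b$, so that the geodesic words represent $z$ and the cone types are $c_j=\mC(b^{-1}w_1\dots w_j)$. Your handling of the scalar/matrix form by repeating the bookkeeping of Corollary \ref{multmatrix} is exactly what the paper implicitly does as well.
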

\begin{proof}
Since  $b^{-1}z\neq b^{-1},$ and $b^{-1}z\in\mC(e,b^{-1}),$  by \eqref{action} and
Proposition \ref{multgeo}
\begin{eqnarray*}
& &\mu[{\mC}(b,e), v_{c_0}](z)=\mu[{\mC}(e,b^{-1}), v_{c_0}](b^{-1}z)\\ \\
& &=
\sum_{\atopn{\text{geodesic words}\, w_1w_2\dots w_n}{z=w_1w_2\dots w_n}}
{\left[\prod_{j=1}^n{H_{w_j,c_j,c_{j-1}}}\right]}(v_{c_0}),
\end{eqnarray*}
where $c_0=\mC(b^{-1}),$ $c_j=\mC(b^{-1}w_1\dots w_j),$
$c_n=\mC(b^{-1}z)$ and
$$\prod_{j=1}^n{H_{w_j,c_j,c_{j-1}}}= H_{w_n,c_n,c_{n-1}} H_{w_{n-1},c_{n-1},c_{n-2}}\dots H_{w_1,c_1,c_0}.$$
\end{proof}

The vector case follows naturally. Let
$d_c$ be the dimension of the (finite dimension) vector space  $V_{c}.$
 $H_{a,c',c}$ can be identified with a $d_{c'}\times d_c$ matrix, and $H_{a,c',c}=0$ for
 non-admissible triples. Let $d=\sum_{c}d_{c}.$

As before, for any  couple of cone types ${c',c}$ and for any $a\in A,$
 multiplication by the scalar $m_{c',c}$ yields
  $m_{c',c}H_{a,c',c}=H_{a,c',c},$ and the latter is zero for
non-admissible triples. Hence, by Proposition \ref{piatta}, we can set $H_{c',c}=m_{c',c}H_{a,c',c}$ and
 define a (block) $d\times d$ matrix
$\mathcal{N}=(H_{c',c}).$

Also, let $\mathcal{V}_{c}=(0\dots I\dots 0)$ be the block
matrix with the identity matrix $I$ at the $d_c$-position, and
$\mathcal{E}_{c}=\mathcal{V}_c^{\top}\mathcal{V}_c $ be the $d\times d$ block
matrix whose entries are all null except
the $(d_c,d_c)$ diagonal element (equal to $I$).

\begin{coro}[Vector case]\label{multmatrixv}
Let $b\in A,$ $c_0=\mC(b),$ and $v\in V_{c_0}.$ Then,
for $z\in{\mC}(e,b),$ $z\neq b,$ we have
$$
\mu[{\mC}(e,b),v](z)
 =\!
\mathcal{V}_{c_n}\mathcal{N}\left[\sum_{\atopn{\text{geodesic words}\,w_1w_2\dots w_n}{b^{-1}z=w_1w_2\dots w_n}}
{\!\!\mathcal{E}_{c_{n-1}} \mathcal{N} \mathcal{E}_{c_{n-2}}\dots \mathcal{N} \mathcal{E}_{c_1}}\right] \mathcal{N} \mathcal{V}_{c_0}^{\top}v,
$$
with notation $c_0=\mC(b),$ $c_j=\mC(bw_1\dots w_j),$ $c_n=\mC(z).$
\end{coro}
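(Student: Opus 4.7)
The plan is to imitate the scalar proof of Corollary~\ref{multmatrix} verbatim, with block matrices in place of scalars. I start from Proposition~\ref{multgeo}, which yields
\[
\mu[{\mC}(e,b),v](z)=\sum_{\atopn{\text{geodesic words}\,w_1\dots w_n}{b^{-1}z=w_1\dots w_n}}\Bigl[\prod_{j=1}^n H_{w_j,c_j,c_{j-1}}\Bigr](v),
\]
where $c_0=\mC(b)$ and $c_j=\mC(bw_1\dots w_j)$. By Proposition~\ref{piatta}, an admissible triple $(a,c',c)$ is uniquely determined by the pair $(c',c)$, so the definition $H_{c',c}=m_{c',c}H_{a,c',c}$ is unambiguous (and $H_{c',c}=0$ whenever there is no admissible $a$). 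Since every geodesic transition from cone type $c_{j-1}$ to cone type $c_j$ via the generator $w_j$ furnishes an admissible triple, $H_{w_j,c_j,c_{j-1}}=H_{c_j,c_{j-1}}$, and the product inside the sum collapses to $H_{c_n,c_{n-1}}H_{c_{n-1},c_{n-2}}\cdots H_{c_1,c_0}$.

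The remaining step is to rewrite this product through $\mathcal{N}$. By construction, the block-row selector $\mathcal{V}_{c'}$ and block-column selector $\mathcal{V}_c^{\top}$ satisfy
\[
\mathcal{V}_{c'}\,\mathcal{N}\,\mathcal{V}_c^{\top}=H_{c',c},
\]
since multiplying on the left (resp.\ right) extracts the $c'$-th block row (resp.\ $c$-th block column) of $\mathcal{N}$. Writing $\mathcal{E}_{c_j}=\mathcal{V}_{c_j}^{\top}\mathcal{V}_{c_j}$, inserting these idempotents between consecutive factors of $\mathcal{N}$ produces the telescoping identity
\[
\mathcal{V}_{c_n}\mathcal{N}\mathcal{E}_{c_{n-1}}\mathcal{N}\mathcal{E}_{c_{n-2}}\cdots\mathcal{E}_{c_1}\mathcal{N}\mathcal{V}_{c_0}^{\top}
=H_{c_n,c_{n-1}}H_{c_{n-1},c_{n-2}}\cdots H_{c_1,c_0}.
\]
Summing over all geodesic words $w_1\dots w_n$ representing $b^{-1}z$, applying both sides to $v\in V_{c_0}$, and combining with the equation displayed above gives the stated formula.

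No step presents a real obstacle: the whole argument is bookkeeping on block dimensions once Propositions~\ref{multgeo} and~\ref{piatta} are in hand. The only point requiring care is dimension matching in the telescoping identity: $\mathcal{V}_{c_n}$ is $d_{c_n}\times d$, each inserted $\mathcal{N}\mathcal{E}_{c_j}$ restricts to the $c_j$-block before being multiplied back into $\mathcal{N}$, and $\mathcal{V}_{c_0}^{\top}v$ lands in the $c_0$-block of $\C^{d}$, so all block products are well defined and the telescoping delivers exactly the composition of the $H_{c_j,c_{j-1}}$.
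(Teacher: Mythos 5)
Your proposal is correct and coincides with the paper's own treatment: the paper gives no separate proof for the vector case, stating only that it ``follows naturally,'' which means precisely your argument --- Proposition \ref{multgeo} for the sum over geodesic words, Proposition \ref{piatta} to make $H_{c',c}=m_{c',c}H_{a,c',c}$ well defined, and the block identity $\mathcal{V}_{c'}\mathcal{N}\mathcal{V}_{c}^{\top}=H_{c',c}$ telescoping exactly as in the scalar Corollary \ref{multmatrix}. Your added remarks on dimension matching and on factoring $\mathcal{V}_{c_n}\mathcal{N}$ and $\mathcal{N}\mathcal{V}_{c_0}^{\top}$ out of the sum (legitimate since $c_0=\mC(b)$ and $c_n=\mC(z)$ are the same for every geodesic representative) are sound and fill in the bookkeeping the paper leaves implicit.
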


Proposition \ref{multgeo2} extends similarly.

\section*{Acknowledgments}
The author would like to thank M. Gabriella Kuhn and Tim Steger  for fruitful and valuable discussions.

This work was supported by the Italian Research Project ``Progetto di Rilevante Interesse
Nazionale"  (PRIN) 2015: ``Real and complex manifolds: geometry, topology,
and harmonic analysis".

\end{document}